\documentclass[11pt]{article}
\usepackage[T1]{fontenc}
\usepackage{lmodern}
\usepackage{amsmath,amssymb,amsthm}
\usepackage[margin=1.2in]{geometry}
\usepackage{microtype}
\usepackage{needspace}
\DeclareUnicodeCharacter{2124}{\ensuremath{\mathbb{Z}}}
\DeclareUnicodeCharacter{211D}{\ensuremath{\mathbb{R}}}
\DeclareUnicodeCharacter{2115}{\ensuremath{\mathbb{N}}}
\DeclareUnicodeCharacter{03C6}{\ensuremath{\varphi}}
\DeclareUnicodeCharacter{03C3}{\ensuremath{\sigma}}
\DeclareUnicodeCharacter{03B5}{\ensuremath{\varepsilon}}
\DeclareUnicodeCharacter{2200}{\ensuremath{\forall}}
\DeclareUnicodeCharacter{2203}{\ensuremath{\exists}}
\DeclareUnicodeCharacter{2208}{\ensuremath{\in}}
\DeclareUnicodeCharacter{2264}{\ensuremath{\le}}
\DeclareUnicodeCharacter{2227}{\ensuremath{\wedge}}
\DeclareUnicodeCharacter{2260}{\ensuremath{\ne}}
\DeclareUnicodeCharacter{2192}{\ensuremath{\to}}
\DeclareUnicodeCharacter{2218}{\ensuremath{\circ}}
\DeclareUnicodeCharacter{2286}{\ensuremath{\subseteq}}
\DeclareUnicodeCharacter{2191}{\ensuremath{\uparrow}}
\DeclareUnicodeCharacter{2243}{\ensuremath{\simeq}}
\DeclareUnicodeCharacter{1D62}{\ensuremath{{}_i}}
\DeclareUnicodeCharacter{2080}{\ensuremath{{}_0}}
\theoremstyle{plain}
\newtheorem{theorem}{Theorem}[section]
\newtheorem{lemma}[theorem]{Lemma}
\newtheorem{proposition}[theorem]{Proposition}
\newtheorem{corollary}[theorem]{Corollary}
\theoremstyle{definition}
\newtheorem{definition}[theorem]{Definition}
\newtheorem{question}[theorem]{Question}
\theoremstyle{remark}
\newtheorem{remark}[theorem]{Remark}
\newcommand{\Z}{\mathbb{Z}}
\newcommand{\N}{\mathbb{N}}
\newcommand{\R}{\mathbb{R}}
\usepackage[hidelinks]{hyperref}
\hypersetup{
  pdftitle={Bijective Non-Contractive Maps of Hereditarily
    Precompact Quasi-Metric Spaces Need Not Be Isometries},
  pdfauthor={Thomas Duzer},
  pdfkeywords={quasi-metric space, expand-contract plasticity,
    non-contractive map, isometry, hereditary precompactness,
    formal verification, Lean}
}
\title{Bijective Non-Contractive Maps of Hereditarily\\
Precompact Quasi-Metric Spaces\\ Need Not Be Isometries}
\author{Thomas Duzer\\[2pt]
\small\texttt{thomas.duzer@gmail.com}\\[1pt]
\small ORCID: \href{https://orcid.org/0009-0007-2147-081X}{0009-0007-2147-081X}}
\date{}
\begin{document}
\maketitle
\begin{abstract}
Zavarzina asked whether every bijective non-contractive self-map of
a precompact quasi-metric space is an isometry. We answer the
question in the negative, in a stronger form: on a \emph{hereditarily}
precompact quasi-metric space (a tolled extension of the
irrational rotation over $\Z$), the shift
is a bijective non-contractive map that is not an isometry. The
construction
generalizes: for every metric space $Y$, every bijective isometry
$\sigma$ of $Y$, and every point whose $\sigma$-orbit is aperiodic
and totally bounded, the tolled $\Z$-extension is such a
counterexample; instances include the irrational rotation and
the $p$-adic odometer. The conjugate space is not precompact, so the
two-sided hypothesis of Zavarzina's positive result is sharp, and
the space is not expand-contract plastic. The core results,
including the general scheme and the circle instance, are
machine-checked in Lean~4 over \texttt{mathlib}, sorry-free
(no unproved steps), under a pinned toolchain. A structural
section proves a monochromy theorem: for every strict pair on
a hereditarily precompact space, one endpoint's backward
self-returns have self-defect at least a quarter of the pair's
defect, uniformly on a chain-closed displacement set.
Boundary results delimit what
remains open: one-sided compactness together with
separation.
\end{abstract}

\medskip
\noindent\textbf{2020 MSC:} 54E35 (primary); 54E40, 54E15,
54E50, 68V20.\\
\textbf{Keywords:} quasi-metric space; expand-contract
plasticity; non-contractive map; isometry; hereditary
precompactness; formal verification; Lean.

\section{Introduction}\label{sec:intro}

A self-map $F$ of a quasi-metric space $(X,d)$ is
\emph{non-contractive} if $d(F(x),F(y)) \ge d(x,y)$ for all $x,y$,
and \emph{non-expansive} if the reverse inequality holds throughout.
For metric spaces, a classical rigidity phenomenon (originating with Freudenthal
and Hurewicz \cite{FH1936}, in the form relevant here due to
Naimpally, Piotrowski, and Wingler \cite{NPW2006}) says that on a totally bounded metric space no
self-map can expand some pair of points without contracting another;
consequently every precompact metric space is \emph{expand-contract
plastic}: every non-expansive bijection of it onto itself is an
isometry. Whether the unit ball of every Banach space is
expand-contract plastic is a well-studied open problem
\cite{CKOW2016,KZ2018,Zav2018,AKZ2019}; the property and its
relatives remain under active study \cite{KZ2024,LZ2024,BMZ2026}.

Zavarzina \cite{Zav2023} carried the question into the asymmetric
setting, where the phenomenon breaks down. Her Theorem~2 exhibits a
hereditarily precompact quasi-metric space (a one-sided sequence on
the unit circle: chord distances on same-parity ascending pairs,
two constant levels otherwise, the map advancing the index by
two) and a non-contractive
self-map which is not an isometry. But the map is not surjective:
the shift on a one-sided index set misses points, and her
construction relies on this (cf.\ the \emph{strong plasticity} of \cite{KZ2024},
which concerns exactly the non-bijective form of the property). Her Theorem~3 restores the metric dichotomy under a
two-sided hypothesis: if both $(X,d)$ and the conjugate $(X,d^{-1})$
are hereditarily precompact, then any function expanding one pair must
contract another. Between the two results she records that she
``tried to find an analogue of [the] example \ldots{} with a bijective
function $F$, but surprisingly failed,'' and poses:

\begin{quote}
\emph{Is it true that for every precompact quasi-metric space $(X,d)$
every bijective non-contractive mapping $F\colon X \to X$ is an
isometry?} \cite{Zav2023}
\end{quote}

We answer the question in the negative, in a stronger form:
the
counterexample is not merely precompact but \emph{hereditarily}
precompact. Fix an irrational
$\theta$ and a toll $c > 0$; on $X = \Z$, let
\[
d(n,m) \;=\; \|(n-m)\theta\| \;+\;
\begin{cases}
c & \text{if } m < n \text{ and } n \ge 0,\\
0 & \text{otherwise},
\end{cases}
\]
where $\|t\|$ denotes the distance from $t$ to the nearest integer;
this is the arc metric of the rotation by $\theta$. The shift
$S(n) = n + 1$ is a bijective non-contractive map that is not an
isometry: $d(0,-1) = d(-1,-2) + c$, the toll firing exactly when
the source enters $\{n \ge 0\}$. We call this
space the \emph{toll spiral}; the general construction and the
origin of the name are explained in
Section~\ref{sec:construction}. The space is hereditarily precompact; the toll is invisible to
the sequential criterion for hereditary precompactness because it
charges only descents from recent sources, and every sequence of
integers admits a subsequence that never performs one: constant,
entirely negative, or increasing.
The obstruction Zavarzina met (a shift on a one-sided index set is
never bijective) disappears on the two-sided index set $\Z$; the
price is that every point now has an infinite backward orbit, and
the toll is what controls it: distances wholly inside $\{n < 0\}$ are
symmetric base distances, and reaching $\{n < 0\}$ from a source in
$\{n \ge 0\}$ costs $c$, once.

The construction is an instance of a general scheme, and we
prove the result in that generality (Theorem~\ref{thm:family}): for \emph{every} metric
space $(Y,\rho)$, every bijective isometry $\sigma$ of $Y$, and every
point $y_0$ whose orbit $\{\sigma^n y_0 : n \in \Z\}$ is aperiodic
and totally bounded, the tolled $\Z$-extension
$d(n,m) = \rho(\sigma^n y_0, \sigma^m y_0) + \tau(n,m)$ is a
hereditarily precompact quasi-metric space on which the shift is a
bijective non-contractive map that is not an isometry. Ambient
compactness is never
used (only the precompactness of the orbit itself), so
every isometry with a totally bounded aperiodic orbit, in any
metric space, generates a counterexample. The irrational rotation is the classical example; the $p$-adic
odometer of Corollary~\ref{cor:padic} is a non-Archimedean one.

Two further consequences are recorded in
Section~\ref{sec:conjugate}. First, the conjugate of the toll spiral
is not even precompact: the ascending ray $\{n \ge 0\}$ has
all its backward distances $\ge c$, and no finite family of
conjugate balls covers it. Zavarzina's two-sided Theorem~3 is therefore
sharp in a strong sense: its second hypothesis cannot be dropped
even when the first is strengthened from precompact to hereditarily
precompact and the map is a bijection. The toll sits precisely in
the blind spot of the one-sided sequential criterion, and the
two-sided hypothesis is exactly what detects it. Second, the inverse
shift $S^{-1}$ is a non-expansive bijection that is not an isometry,
so the toll spiral is a hereditarily precompact quasi-metric space
that is not expand-contract plastic.

\paragraph{Terminology.}
Throughout, $d(x,y)$ is read as a cost of passage from $x$ to
$y$: in the asymmetric setting the cost of a trip need not equal
the cost of the return, and the triangle inequality says that no
detour undercuts a direct passage. The reading is the classical
one of transport costs, from Monge and Kantorovich to Lawvere's
presentation of generalized metric spaces as categories enriched
in costs \cite{Lawvere1973}. The \emph{toll} of the construction is a
surcharge on certain passages; a \emph{fee} is the cost of a
backward self-return along an orbit, $d(x, F^{-\Delta}x)$ or
$d(F^{-\Delta}y, y)$, the quantity
Section~\ref{sec:anatomy} bounds from below; and the
\emph{register} of Section~\ref{sec:anatomy} is the invariant
book of costs that remains once every surcharge has lapsed.

\paragraph{Verification.}
The following results of Section~\ref{sec:construction} are
machine-checked in Lean~4 over \texttt{mathlib}, sorry-free,
under a pinned toolchain: the quasi-metric axioms, including
separation; the bijectivity of the shift; non-contractiveness
and strictness; hereditary precompactness, both in sequential
form and in definitional net form (a finite $\varepsilon$-net
inside every subset); the general scheme
(Theorem~\ref{thm:family}); and the circle instance. Each of
the eleven audited theorems depends only on the axioms
\texttt{propext}, \texttt{Classical.choice}, and
\texttt{Quot.sound}.
Section~\ref{sec:mech} states the correspondence precisely. The
conjugate-space computation of Section~\ref{sec:conjugate}, the
$p$-adic instance (Corollary~\ref{cor:padic}), the
structural theory of Section~\ref{sec:anatomy}, and the boundary
results of Section~\ref{sec:questions} are proved on paper
only; within Section~\ref{sec:anatomy}, full proofs are given for
the invariant limit register, the covering inequality, the
monochromy theorem, and the conservation observation
(Proposition~\ref{prop:conserve}); one closing remark on the
design space is informal and marked as such.

\paragraph{Organization.}
Section~\ref{sec:prelim} fixes definitions.
Section~\ref{sec:construction} builds the family and proves the main
theorem. Section~\ref{sec:conjugate} computes the conjugate space
and derives the sharpness and plasticity corollaries.
Section~\ref{sec:anatomy} develops the structural theory: the
invariant limit register, the covering inequality, and the
monochromy theorem. Section~\ref{sec:mech} documents the mechanization.
Section~\ref{sec:questions} poses the questions the answer
opens and proves three boundary results locating them.

\section{Preliminaries}\label{sec:prelim}

Following \cite{Zav2023,Cobzas2013}, a \emph{quasi-metric} on a set
$X$ is a function $d\colon X \times X \to [0,\infty)$ with
(a) $d(x,x) = 0$; (b) $d(x,z) \le d(x,y) + d(y,z)$; (c) $d(x,y) = 0
\Rightarrow x = y$. Dropping (c) gives a
\emph{quasi-pseudo-metric}. The \emph{conjugate} quasi-metric is
$d^{-1}(x,y) = d(y,x)$. \cite{Zav2023} defines the ball as
$B(x,\varepsilon) = \{y \in X : d(x,y) < \varepsilon\}$ and calls
$(X,d)$ \emph{precompact} when for every $\varepsilon > 0$ there
is a finite $Y \subseteq X$ with
$X \subseteq \bigcup_{x \in Y} B(x,\varepsilon)$. We adopt both
definitions verbatim (the ball is the \emph{forward} ball at
$x$; all balls below are forward balls); the net form of the
mechanization (\texttt{hereditarilyPrecompact\_net},
Section~\ref{sec:mech}) states exactly this condition inside
every subset. $(X,d)$ is \emph{hereditarily
precompact} if every subspace is precompact, a genuine
strengthening in the asymmetric setting, where subspaces of
precompact spaces can fail to be precompact \cite{Zav2023}.

\begin{samepage}
A sequence $(x_n)$ is \emph{left K-Cauchy} if for every
$\varepsilon > 0$ there is $N$ with $d(x_n, x_m) < \varepsilon$ for
all $m \ge n \ge N$. We use throughout the sequential
characterization of K\"unzi, Mr\v{s}evi\'c, Reilly, and
Vamanamurthy \cite[Theorem~3]{KMRV1993}, as used in \cite{Zav2023}:
\nopagebreak

\begin{quote}
$(X,d)$ is hereditarily precompact if and only if every sequence in
$X$ has a left K-Cauchy subsequence.
\end{quote}
\end{samepage}

A self-map $F$ is \emph{non-contractive} if
$d(F(x),F(y)) \ge d(x,y)$ for all $x,y$, \emph{non-expansive} if
$d(F(x),F(y)) \le d(x,y)$ for all $x,y$, and an \emph{isometry} if
equality holds throughout. For a bijection $F$ of $X$ onto
itself, $F$ is non-contractive if and only if $F^{-1}$ is
non-expansive, so the question's non-contractive form and the
non-expansive form of expand-contract plasticity are
equivalent; we
work with the former throughout. For bijective $F$ and $k \in \Z$ write
$\lambda_k(x,y) = d(F^k x, F^k y)$; non-contractiveness applied at
the pair $(F^k x, F^k y)$ gives the \emph{ladder relation}
$\lambda_k \le \lambda_{k+1}$ for every $k \in \Z$: each pair's
distance history is nondecreasing in time, for both positive
and negative $k$.

\begin{remark}[Convention independence]\label{rem:convention}
The literature splits on the orientation of balls and of
K-Cauchyness (even within the work of K\"unzi and coauthors). Nothing below depends
on the choice: conjugation preserves bijectivity and
non-contractiveness ($d^{-1}(Sn, Sm) = d(Sm, Sn) \ge d(m,n) =
d^{-1}(n,m)$), so under the opposite convention the conjugate
space $(\Z, d^{-1})$, with the same shift, is the counterexample,
and the roles of Sections~\ref{sec:construction} and
\ref{sec:conjugate} are exchanged. As the ball and
precompactness definitions above are \cite{Zav2023}'s verbatim,
and the left K-Cauchy definition runs through the same
characterization \cite[Theorem~3]{KMRV1993}, this remark serves
only readers arriving from the opposite convention.
\end{remark}

\section{The construction}\label{sec:construction}

\begin{definition}[The tolled $\Z$-extension]\label{def:toll}
Let $(Y,\rho)$ be a metric space, $\sigma\colon Y \to Y$ a bijective
isometry, $y_0 \in Y$, and $c > 0$. Write
$\varphi(n) = \sigma^n y_0$ for $n \in \Z$. The \emph{toll} is
\[
\tau(n,m) \;=\;
\begin{cases}
c & \text{if } m < n \text{ and } n \ge 0,\\
0 & \text{otherwise},
\end{cases}
\]
and the \emph{toll spiral} over $(Y,\rho,\sigma,y_0,c)$ is the space
$(\Z, d)$ with
\[
d(n,m) \;=\; \rho(\varphi(n), \varphi(m)) \;+\; \tau(n,m).
\]
We call $\{n \ge 0\}$ the \emph{recent zone} and $\{n < 0\}$ the
\emph{deep zone}: the toll charges exactly the pairs whose first
coordinate is recent and whose second lies strictly below it
(descents in index from recent sources). (The name comes from the
circle instance: as $n$ increases, the points $n\theta$ wind
around the circle without closing up, so the indexed orbit forms
an endless helix over the circle, and the toll is levied where
the index enters the recent zone. For other bases the name is used by
analogy.)
\end{definition}

\begin{samepage}
Throughout this section we assume:
\nopagebreak
\begin{itemize}
\item[(A1)] \emph{Aperiodicity:} $\sigma^k y_0 \ne y_0$ for every
$k \ne 0$; equivalently, $\varphi$ is injective.
\item[(A2)] \emph{Precompact orbit:} the orbit
$\varphi(\Z) = \{\sigma^n y_0 : n \in \Z\}$ is totally bounded
in $(Y,\rho)$.
\end{itemize}
\end{samepage}
No compactness, completeness, or boundedness of the ambient $Y$ is
assumed; (A2) concerns the orbit alone.

The only property of $\varphi$ used below is \emph{equivariance}: since $\sigma$ is an isometry,
\begin{equation}\label{eq:equiv}
\rho(\varphi(n+1), \varphi(m+1)) = \rho(\varphi(n), \varphi(m))
\qquad (n, m \in \Z),
\end{equation}
i.e.\ the base part of $d$ is invariant under the shift
$S(n) = n+1$. (We call the $\rho$-summand of $d$ the \emph{base
part} throughout. In the circle instance below, $\rho$ is the arc
metric $\|\cdot\|$; the Euclidean chord
$|e^{2\pi i n\theta} - e^{2\pi i m\theta}|$ used in \cite{Zav2023}
serves equally; either choice works.)

\subsection{Quasi-metric axioms}

\begin{lemma}[Indicator subadditivity]\label{lem:tollsub}
$\tau(n,k) \le \tau(n,m) + \tau(m,k)$ for all $n,m,k \in \Z$.
\end{lemma}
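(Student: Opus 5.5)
Since $\tau$ takes only the values $0$ and $c$, the inequality can fail only when the left side equals $c$ and both summands on the right vanish. The plan is therefore to argue by contraposition: assume $\tau(n,k)=c$, that is, $k<n$ and $n\ge 0$, and show that at least one of $\tau(n,m)$, $\tau(m,k)$ equals $c$. Then the right-hand side is at least $c$ and the inequality holds. If instead $\tau(n,k)=0$, it holds trivially because $\tau\ge 0$.

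Under the assumption $k<n$ and $n\ge 0$, we split on the position of $m$ relative to $n$. If $m<n$, then since $n\ge 0$ the toll $\tau(n,m)=c$ fires and we are done. Otherwise $m\ge n$. This gives $m\ge n\ge 0$, so $m$ lies in the recent zone, and $k<n\le m$, so $k<m$. Hence $\tau(m,k)=c$.

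These two cases exhaust all $m\in\Z$, which proves the lemma. The argument has no real obstacle. The only point to notice is that in the second case recency passes from $n$ to $m$ because $m\ge n\ge 0$. This is the monotonicity of the recent zone $\{n\ge 0\}$ under moving upward in index, and it is the one structural feature of the toll that the proof uses. Together with~\eqref{eq:equiv} and the triangle inequality for $\rho$, the lemma then gives the triangle inequality for $d$ by adding the two subadditive parts.
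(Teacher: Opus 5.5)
Your proof is correct and is essentially the paper's argument. The paper assumes both right-hand terms vanish and derives $k \ge m \ge n > k$; your case split on $m<n$ versus $m\ge n$ is the same reasoning in direct form. One small correction concerns only your closing aside: equivariance \eqref{eq:equiv} plays no role in the triangle inequality for $d$, which needs only the triangle inequality for $\rho$ together with this lemma.
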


\begin{proof}
If $\tau(n,k) = 0$ the inequality is trivial. Suppose
$\tau(n,k) = c$, so $k < n$ and $n \ge 0$, and suppose toward a
contradiction that both terms on the right vanish. From
$\tau(n,m) = 0$ and $n \ge 0$ we get $m \ge n$; then $m \ge 0$, and
from $\tau(m,k) = 0$ we get $k \ge m$. So $k \ge m \ge n > k$ ---
absurd. Hence at least one right-hand term equals $c$, and the
right side is $\ge c = \tau(n,k)$.
\end{proof}

\begin{lemma}[Indicator monotonicity]\label{lem:tollmono}
$\tau(n,m) \le \tau(n+1, m+1)$ for all $n,m \in \Z$. Moreover, for
a fixed pair with $m < n$, the function
$t \mapsto \tau(n+t, m+t)$ is nondecreasing on $\Z$, equal to $0$
for $t < -n$ and to $c$ for $t \ge -n$: pairs migrate \emph{into}
the toll zone, never out of it; once a descending pair's source has
crossed into $\{n \ge 0\}$, the toll is charged at every later
time.
\end{lemma}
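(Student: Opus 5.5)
The plan is to unfold the definition of $\tau$ and compare the two firing conditions directly. Note first that $m < n$ holds if and only if $m+1 < n+1$, so the pairs $(n,m)$ and $(n+1,m+1)$ fall under the same comparison. The only difference between the two conditions is the threshold on the source, $n \ge 0$ versus $n+1 \ge 0$.

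For the first inequality, $\tau(n,m) \le \tau(n+1,m+1)$, split on whether $\tau(n,m)$ vanishes.
\begin{itemize}
\item If $\tau(n,m) = 0$, there is nothing to prove, since $\tau \ge 0$.
\item If $\tau(n,m) = c$, then $m < n$ and $n \ge 0$. Hence $m+1 < n+1$ and $n+1 \ge 1 \ge 0$, so $\tau(n+1,m+1) = c$.
\end{itemize}

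For the ``moreover'' clause, fix $m < n$ and set $f(t) = \tau(n+t, m+t)$. Since $m+t < n+t$ for every $t \in \Z$, the descent condition holds identically in $t$. Therefore $f(t) = c$ exactly when $n+t \ge 0$, that is, when $t \ge -n$, and $f(t) = 0$ when $t < -n$. This explicit step-function form gives monotonicity directly. Alternatively, monotonicity follows by applying the first part to the pair $(n+t, m+t)$, which yields $f(t) \le f(t+1)$.

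The interpretive sentence (pairs migrate into the toll zone and never leave it) is just a restatement of this step form, so it needs no separate argument. I expect no real obstacle here. The only point requiring care is to note that strict descent $m < n$ is preserved under the diagonal shift, so the threshold $n \ge 0$ is the only thing that changes with $t$.
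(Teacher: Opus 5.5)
Your proposal is correct and follows the paper's proof essentially verbatim: the case $\tau(n,m)=c$ propagates because both $m<n$ and $n\ge 0$ survive the diagonal shift. For fixed $m<n$ the descent condition holds for every $t$, so only the threshold $n+t\ge 0$, i.e.\ $t\ge -n$, varies with $t$; your remark that the case $\tau(n,m)=0$ needs $\tau\ge 0$ (from $c>0$) is a harmless detail the paper leaves implicit.
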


\begin{proof}
If $\tau(n,m) = c$ then $m < n$ and $n \ge 0$, whence $m+1 < n+1$
and $n + 1 \ge 0$, so $\tau(n+1,m+1) = c$. The description of the
one-parameter family is immediate: for $m < n$, the condition
$m + t < n + t$ holds always, and $n + t \ge 0$ holds exactly for
$t \ge -n$.
\end{proof}

\begin{proposition}[Quasi-metric]\label{prop:qm}
$(\Z, d)$ is a quasi-pseudo-metric space without further
assumptions; (A1) is needed only for separation (making it a
quasi-metric) and, within this proposition, (A2) only for
boundedness:
$\sup_{n,m} d(n,m) \le \operatorname{diam}_\rho(\varphi(\Z)) + c
< \infty$.
\end{proposition}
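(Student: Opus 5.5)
The plan is to check the axioms directly, treating the base part and the toll part separately, since $d$ is a sum of two functions on $\Z\times\Z$.

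\emph{Nonnegativity and vanishing on the diagonal.} Both summands are nonnegative, because $\rho\ge 0$ and $\tau\in\{0,c\}$ with $c>0$. For the diagonal, $\rho(\varphi(n),\varphi(n))=0$. Also $\tau(n,n)=0$, since the condition $m<n$ fails when $m=n$. Hence $d(n,n)=0$.

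\emph{Triangle inequality.} The base part $(n,m)\mapsto\rho(\varphi(n),\varphi(m))$ is the pullback of the metric $\rho$ along $\varphi$, so it satisfies the triangle inequality. The toll part satisfies it by Lemma~\ref{lem:tollsub}. Adding the two inequalities termwise gives $d(n,k)\le d(n,m)+d(m,k)$. Together with the previous step, this shows that $(\Z,d)$ is a quasi-pseudo-metric space. Neither (A1) nor (A2) is used here.

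\emph{Separation under (A1).} Suppose $d(n,m)=0$. Since both summands are nonnegative, each of them vanishes. In particular $\rho(\varphi(n),\varphi(m))=0$, so $\varphi(n)=\varphi(m)$ because $\rho$ is a metric. Injectivity of $\varphi$, which is (A1), then gives $n=m$. I would also note why (A1) is needed: without it, $\varphi(n)=\varphi(m)$ with $m>n$ gives $\tau(n,m)=0$, hence $d(n,m)=0$, and separation fails. So (A1) is exactly what is required at this step.

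\emph{Boundedness under (A2).} A totally bounded set is bounded, so $\operatorname{diam}_\rho(\varphi(\Z))<\infty$. For all $n,m$ we have $\tau(n,m)\le c$, so $d(n,m)\le \operatorname{diam}_\rho(\varphi(\Z))+c$.

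No step presents a real obstacle. The only nontrivial input, the subadditivity of the toll, is already supplied by Lemma~\ref{lem:tollsub}. The remaining care is in the bookkeeping of which hypothesis is used where: (A1) enters only through separation and (A2) only through the diameter bound.
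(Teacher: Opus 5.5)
Your proof is correct and follows the paper's argument step for step: the diagonal vanishes because $m<n$ fails at $m=n$, the triangle inequality comes from adding the base-part inequality to Lemma~\ref{lem:tollsub}, separation follows because both nonnegative summands must vanish and $\varphi$ is injective, and boundedness comes from total boundedness of the orbit together with $\tau\le c$. Your extra observation is also correct and goes slightly beyond the paper: if $\varphi$ is not injective, then $d(n,m)=0$ for some ascending pair $n<m$, so (A1) is genuinely necessary for separation, a point the paper does not spell out.
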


\begin{proof}
(a) $d(n,n) = \rho(\varphi(n),\varphi(n)) + \tau(n,n) = 0$ since
$n < n$ fails. (b) The base part satisfies the triangle inequality
because $\rho$ does; the toll part by
Lemma~\ref{lem:tollsub}; their sum by addition. (c) If
$d(n,m) = 0$ then both summands vanish, so
$\varphi(n) = \varphi(m)$, and injectivity of $\varphi$ (A1) gives
$n = m$. Boundedness: the orbit is totally bounded, hence
$\rho$-bounded, and $\tau \le c$.
\end{proof}

\subsection{The shift is non-contractive but not an isometry}

\begin{proposition}[Non-contractiveness and strictness]
\label{prop:noncontr}
The shift $S(n) = n + 1$ is a bijective non-contractive self-map of
$(\Z,d)$, and it is not an isometry: for every pair $(n,m)$ with
$m < n < 0$, after $|n|$ applications of $S$,
\[
d\bigl(S^{|n|} n,\, S^{|n|} m\bigr) \;=\; d(n,m) + c ,
\]
and in particular $d(0,-1) = d(-1,-2) + c$.
\end{proposition}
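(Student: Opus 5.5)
The plan splits the statement into three parts and handles each with the two ingredients already available: equivariance \eqref{eq:equiv} for the base part and Lemma~\ref{lem:tollmono} for the toll part.

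\emph{Bijectivity.} This is immediate: $S(n)=n+1$ has inverse $n\mapsto n-1$ on $\Z$.

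\emph{Non-contractiveness.} For arbitrary $n,m$, write
\[
d(Sn,Sm)=\rho(\varphi(n+1),\varphi(m+1))+\tau(n+1,m+1).
\]
By \eqref{eq:equiv} the first summand equals $\rho(\varphi(n),\varphi(m))$. By Lemma~\ref{lem:tollmono} the second is at least $\tau(n,m)$. Adding these gives $d(Sn,Sm)\ge d(n,m)$.

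\emph{Strictness.} Fix $m<n<0$ and put $t=|n|=-n$.
\begin{itemize}
\item Iterating \eqref{eq:equiv} $t$ times shows the base part of $d(S^t n,S^t m)$ equals $\rho(\varphi(n),\varphi(m))$.
\item For the toll, use the one-parameter description in Lemma~\ref{lem:tollmono}. At time $0$ we have $0<-n$, so $\tau(n,m)=0$ (equivalently, $n<0$ means the source is not recent). At time $t=-n$ the threshold $t\ge -n$ is met, so $\tau(n+t,m+t)=c$.
\end{itemize}
Summing, $d(S^t n,S^t m)=d(n,m)+c$. Since $c>0$, this is strictly larger than $d(n,m)$.

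Because $S$ is non-contractive, the ladder relation gives
\[
d(n,m)\le d(Sn,Sm)\le\cdots\le d(S^t n,S^t m).
\]
The chain starts at $d(n,m)$ and ends at $d(n,m)+c$, so at least one step $d(S^k n,S^k m)<d(S^{k+1}n,S^{k+1}m)$ is strict, and $S$ is not an isometry. More directly, the toll description shows the jump happens exactly at the last step: $\tau(n+t-1,m+t-1)=0$ because $n+t-1=-1<0$. So the pair $(S^{t-1}n,S^{t-1}m)=(-1,\,m-n-1)$ is itself a witness to non-isometry.

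\emph{Specialization.} The displayed instance is $n=-1$, $m=-2$, $t=1$. It reads $d(0,-1)=\rho(\varphi(-1),\varphi(-2))+c=d(-1,-2)+c$, using $\tau(-1,-2)=0$.

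No step is a genuine obstacle. The only point needing care is bookkeeping the threshold $t\ge -n$ correctly, so that the toll is absent before the last shift and present after it. Lemma~\ref{lem:tollmono} already packages exactly this.
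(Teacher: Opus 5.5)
Your proposal is correct and follows the paper's proof. Equivariance \eqref{eq:equiv} makes the base parts equal along the shift, so Lemma~\ref{lem:tollmono} gives non-contractiveness, and the one-parameter description of the toll gives the exact jump of $c$ at time $-n$. Your extra observation that the jump occurs precisely at the last step, at the pair $(-1, m-n-1)$, is a correct refinement, consistent with the paper's later remark on ladders.
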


\begin{proof}
By equivariance \eqref{eq:equiv} the base parts of $d(n,m)$ and
$d(n+1, m+1)$ agree, so
$d(n+1,m+1) - d(n,m) = \tau(n+1,m+1) - \tau(n,m) \ge 0$ by
Lemma~\ref{lem:tollmono}: $S$ is non-contractive. For strictness,
take $m < n < 0$. Then $\tau(n,m) = 0$ (deep source) while after
$-n$ steps the pair is $(0, m - n) $ with recent source, so
$\tau = c$ there; the base parts are equal throughout by
\eqref{eq:equiv}, and the distance has risen by exactly $c$. With
$(n,m) = (-1,-2)$: $d(-1,-2) = \rho(\varphi(-1),\varphi(-2))$ and
$d(0,-1) = \rho(\varphi(0),\varphi(-1)) + c$, with equal base parts.
\end{proof}

\begin{remark}[The ladder of a pair]
Combining \eqref{eq:equiv} with Lemma~\ref{lem:tollmono}: the
distance history $t \mapsto d(n+t, m+t)$ of any pair is a constant
(the base part) for ascending pairs $m > n$, and for descending pairs
$m < n$ it is a step function: the base part, then one jump of
exactly $c$ when the source enters $\{n \ge 0\}$, then constant
thereafter. Every ladder is nondecreasing with at most one jump; the
toll is paid once.
\end{remark}

\subsection{Hereditary precompactness}

The following extraction lemma is classical; we include the proof
for completeness.

\begin{lemma}[Precompact-orbit extraction]\label{lem:extract}
Let $(Y,\rho)$ be a metric space and let $(y_k)_{k \in \N}$ be a
sequence whose range is totally bounded. Then $(y_k)$ has a
$\rho$-Cauchy subsequence.
\end{lemma}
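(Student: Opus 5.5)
The plan is the standard diagonal (Bolzano--Weierstrass-type) extraction. I will use total boundedness of $R = \{y_k : k \in \N\}$ repeatedly at scales $2^{-j}$. The subsequence will be built inductively, along with a nested chain of infinite index sets $\N \supseteq I_1 \supseteq I_2 \supseteq \cdots$. Each $I_j$ will have the property that all $y_k$ with $k \in I_j$ lie in a single $\rho$-ball of radius $2^{-j}$ centred at some point of $R$.

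For the inductive step, suppose $I_{j-1}$ is infinite (with $I_0 = \N$). By total boundedness, $R$ is covered by finitely many open balls $B(z_1, 2^{-j}), \dots, B(z_r, 2^{-j})$.
\begin{itemize}
\item If the net points lie outside $R$, I use $2^{-j-1}$-nets instead and recentre on points of $R$; this is harmless.
\item The infinite index set $I_{j-1}$ is partitioned into the finitely many sets $\{k \in I_{j-1} : y_k \in B(z_i,2^{-j})\}$, with ties broken by least $i$. By pigeonhole, one of these is infinite, and I call it $I_j$.
\end{itemize}
Then I choose $k_j \in I_j$ with $k_j > k_{j-1}$, which is possible since $I_j$ is infinite.

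To verify the Cauchy property, take $i \ge j$. Then $k_i, k_j \in I_j$, because the sets are nested. So both $y_{k_i}$ and $y_{k_j}$ lie in a common ball of radius $2^{-j}$, and by symmetry and the triangle inequality for the metric $\rho$, $\rho(y_{k_i}, y_{k_j}) < 2 \cdot 2^{-j}$. Given $\varepsilon > 0$, pick $N$ with $2^{1-N} < \varepsilon$. Then $\rho(y_{k_i}, y_{k_j}) < \varepsilon$ for all $i, j \ge N$, so $(y_{k_j})$ is $\rho$-Cauchy.

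There is no real obstacle here. The one point needing care is that the sequence may repeat values, so the pigeonhole must be applied to infinite sets of \emph{indices}, not to the range. If $R$ is finite, some value recurs infinitely often and the constant subsequence already works; the index-based argument covers this case automatically. Only symmetry of $\rho$ is used, which is why this lemma lives on the metric base before being transported to the quasi-metric $d$.
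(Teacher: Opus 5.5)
Your proof is correct and is essentially the paper's argument: nested infinite index sets $I_j$, each of whose terms lies in one ball of radius $2^{-j}$ (found by pigeonhole), then a diagonal choice $k_j \in I_j$ and the bound $\rho(y_{k_i}, y_{k_j}) < 2^{1-j}$. The recentring of the net on points of the range is harmless but unnecessary, since the Cauchy estimate only uses that the terms share a ball, not where its centre lies.
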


\begin{proof}
For each $j \ge 1$ cover the range by finitely many balls of radius
$2^{-j}$. Inductively choose infinite sets
$\N \supseteq I_1 \supseteq I_2 \supseteq \cdots$ such that
$\{y_k : k \in I_j\}$ lies in one ball of radius $2^{-j}$ (at each
stage some ball of the finite cover contains infinitely many
indices of the previous set). A diagonal subsequence
$k_1 < k_2 < \cdots$ with $k_j \in I_j$ is Cauchy: for
$i, i' \ge j$, both $y_{k_i}$ and $y_{k_{i'}}$ lie in one ball of
radius $2^{-j}$, so $\rho(y_{k_i}, y_{k_{i'}}) < 2^{1-j}$.
\end{proof}

\begin{theorem}[Hereditary precompactness]\label{thm:hered}
Under (A2), $(\Z,d)$ is hereditarily precompact.
\end{theorem}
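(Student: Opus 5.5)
We use the sequential characterization of K\"unzi, Mr\v{s}evi\'c, Reilly, and Vamanamurthy \cite[Theorem~3]{KMRV1993} quoted in Section~\ref{sec:prelim}. It suffices to show that every sequence $(n_k)$ in $\Z$ has a left K-Cauchy subsequence, that is, one with $d(n_{k_i}, n_{k_j}) < \varepsilon$ for all $j \ge i \ge N$. The plan is to pass to a subsequence in two stages. The first stage makes the toll vanish on all forward pairs. The second makes the base part Cauchy, using Lemma~\ref{lem:extract}.

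\emph{Stage one (killing the toll).} We split according to the behavior of $(n_k)$.
\begin{itemize}
\item If some value is taken infinitely often, pass to that constant subsequence. It is trivially left K-Cauchy, since $d(n,n)=0$.
\item Otherwise every value occurs finitely often, so $|n_k| \to \infty$ along the sequence. Then infinitely many terms are negative, or infinitely many are positive and unbounded above.
\item In the negative case, pass to a subsequence lying in $\{n<0\}$. There $\tau(n,m)=0$ for every pair, because the source is deep.
\item In the other case, extract a strictly increasing subsequence $n_{k_1} < n_{k_2} < \cdots$. This is possible since the values are unbounded above. For indices $i \le j$ we then have $m = n_{k_j} \ge n = n_{k_i}$, so the condition $m < n$ fails and $\tau(n,m) = 0$.
\end{itemize}
In every case the resulting subsequence $(m_i)$ satisfies $\tau(m_i, m_j) = 0$ whenever $i \le j$. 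So along it, $d(m_i,m_j) = \rho(\varphi(m_i),\varphi(m_j))$ for $i \le j$.

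\emph{Stage two (base part).} The points $\varphi(m_i)$ lie in the orbit $\varphi(\Z)$, which is totally bounded by (A2). Lemma~\ref{lem:extract} therefore yields a further subsequence along which $(\varphi(m_i))$ is $\rho$-Cauchy. Passing to a subsequence preserves both properties secured in stage one: membership in $\{n<0\}$, and being strictly increasing. Hence forward distances along the final subsequence equal base distances. These are eventually below any $\varepsilon$, which gives a left K-Cauchy subsequence. By the characterization, $(\Z,d)$ is hereditarily precompact.

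The only delicate point is the case analysis in stage one. It must cover every sequence, and the increasing case must be oriented correctly: the toll charges $m<n$ with recent source $n$, so we need an \emph{increasing} subsequence, not a decreasing one. No other obstacle is expected. Aperiodicity (A1) plays no role here, consistent with the statement assuming only (A2). If one instead wants the definitional net form, the same argument can be run by contradiction. A subset admitting no finite $\varepsilon$-net yields a sequence $(x_k)$ with $d(x_i,x_j)\ge\varepsilon$ for all $i<j$, which has no left K-Cauchy subsequence.
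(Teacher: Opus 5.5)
Your proof is correct and is essentially the paper's sequential proof: you make the subsequence toll-free in order by the same three-way extraction (constant, all-negative, strictly increasing), then make the base part Cauchy via Lemma~\ref{lem:extract}. Your closing greedy remark proves the only direction of \cite[Theorem~3]{KMRV1993} that the argument uses, so your proof is self-contained; the paper also gives a separate direct net proof, which handles the deep part as a metric subspace and, in each small ball meeting the recent part, takes the least recent index as the center.
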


The main theorem does not need the sequential characterization:
the direct proof verifies the definition. We give both
proofs: the direct one first, then the sequential one, which
relies on \cite[Theorem~3]{KMRV1993}; both are mechanized
(Section~\ref{sec:mech}).

\begin{proof}[Direct proof]
Let $W \subseteq \Z$ and $\varepsilon > 0$. The deep part
$W^- = W \cap \{n < 0\}$ carries no toll from any of its
sources, so $d$ restricted to $W^-$ is the pseudometric
$\rho(\varphi(\cdot),\varphi(\cdot))$, a metric under (A1),
which is not needed here; a subset of a totally
bounded set is totally bounded with centers in the subset, so
$W^-$ has a finite $\varepsilon$-net inside itself. For the
recent part $W^+ = W \cap \{n \ge 0\}$, cover $\varphi(W^+)$
by finitely many $\rho$-balls of radius $\varepsilon/2$ (A2),
and in each ball meeting $\varphi(W^+)$ pick the \emph{least}
$a \in W^+$ with $\varphi(a)$ in that ball; a least element
exists, the nonnegative integers being well-ordered. For any
$y \in W^+$, the chosen $a$ of a ball containing $\varphi(y)$
satisfies $a \le y$, so $\tau(a,y) = 0$ and
$d(a,y) = \rho(\varphi(a),\varphi(y)) < \varepsilon$. The
union of the two finite center sets is an $\varepsilon$-net for
$W$ inside $W$.
\end{proof}

\begin{proof}[Sequential proof]
We show every sequence in $(\Z,d)$ has a left K-Cauchy
subsequence; by \cite[Theorem~3]{KMRV1993}, hereditary
precompactness follows. Call a sequence $(w_k)$ in $\Z$ \emph{toll-free in order} if
$\tau(w_k, w_l) = 0$ whenever $k \le l$. The theorem follows from
two claims.

\emph{Claim 1: every sequence $(u_k)$ in $\Z$ has a subsequence
that is toll-free in order.} Three cases.
(i) If some value $v$ recurs infinitely often, pass to the constant
subsequence $u_{k} \equiv v$; the toll vanishes on the diagonal.
(ii) Otherwise, if $u_k < 0$ for infinitely many $k$, pass to the
all-negative subsequence: the toll needs a recent source, and every
source in this subsequence is deep.
(iii) Otherwise all values recur finitely often and only finitely
many terms are negative. Then every tail of the sequence is
unbounded above: if some tail were bounded above by $B$, its terms
would lie, with finitely many exceptions, in the finite set
$[0, B] \cap \Z$, forcing some value to recur infinitely often.
Hence one can extract $k_1 < k_2 < \cdots$ with
$u_{k_1} < u_{k_2} < \cdots$ strictly ascending in value; the toll
needs a descent $w_l < w_k$ with $k \le l$, and there is none.

\emph{Claim 2: a sequence $(w_k)$ that is toll-free in order has a
left K-Cauchy subsequence.} The points $\varphi(w_k)$ lie in the
orbit, which is totally bounded (A2); by
Lemma~\ref{lem:extract} there is a subsequence $(w_{g(k)})$ with
$(\varphi(w_{g(k)}))$ $\rho$-Cauchy. Passing to a subsequence
preserves toll-freeness in order (indices only move forward), so
for $k \le l$,
\[
d(w_{g(k)}, w_{g(l)})
= \rho\bigl(\varphi(w_{g(k)}), \varphi(w_{g(l)})\bigr),
\]
which becomes arbitrarily small: the subsequence is left
K-Cauchy.
\end{proof}

\begin{remark}[Where the toll hides]\label{rem:blindspot}
The proof shows how a uniform reversed separation can
coexist with hereditary precompactness. On the ascending ray
$\{n \ge 0\}$ every \emph{backward} distance is
$d(n,m) \ge c$ for $m < n$: a uniformly separated family in the
reversed orientation. But the sequential criterion only ever
inspects $d(w_k, w_l)$ for $k \le l$ (sequence order), and the nonnegative integers are
well-ordered,
so any sequence drawn from the ray admits a nondecreasing
subsequence, whose ordered pairs are toll-free. The reversed gaps are real, uniform, and invisible to the
left criterion. They become visible exactly when the orientation is
reversed, which is the content of Section~\ref{sec:conjugate}.
\end{remark}

\subsection{The main theorem and the circle instance}

\begin{theorem}[The counterexample family]\label{thm:family}
Let $(Y,\rho)$ be a metric space, $\sigma$ a bijective isometry of
$Y$, $y_0 \in Y$ satisfying (A1) and (A2), and $c > 0$. Then the
toll spiral $(\Z, d)$ over $(Y,\rho,\sigma,y_0,c)$ is a bounded,
hereditarily precompact quasi-metric space, and the shift
$S(n) = n+1$ is a bijective non-contractive self-map of it which is
not an isometry.
\end{theorem}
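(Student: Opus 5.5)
The theorem is an assembly of results already proved in this section, so the plan is to collect them in order and check that the hypotheses match exactly. First, Proposition~\ref{prop:qm} gives that $(\Z,d)$ is a quasi-pseudo-metric space without further assumptions. Under (A1) the map $\varphi$ is injective, which yields separation, so $d$ is a genuine quasi-metric. Under (A2) the orbit is $\rho$-bounded, so $\sup d \le \operatorname{diam}_\rho(\varphi(\Z)) + c < \infty$, which is the boundedness claim. Second, Theorem~\ref{thm:hered} gives hereditary precompactness from (A2) alone. I would cite the direct proof, since it avoids any dependence on \cite[Theorem~3]{KMRV1993} and verifies the definition verbatim.

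Third, the shift. Bijectivity of $S(n)=n+1$ on $\Z$ is immediate, with inverse $n\mapsto n-1$. Non-contractiveness is the first half of Proposition~\ref{prop:noncontr}. There, equivariance \eqref{eq:equiv}, which uses only that $\sigma$ is an isometry, shows the base parts of $d(n,m)$ and $d(n+1,m+1)$ coincide. Lemma~\ref{lem:tollmono} then shows the toll part is nondecreasing along the shift. For failure of isometry, Proposition~\ref{prop:noncontr} exhibits $d(0,-1)=d(-1,-2)+c$. Since $c>0$, this gives $d(S(-1),S(-2)) \ne d(-1,-2)$, which is a single witnessing pair.

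The only point needing care is the direction of the strict inequality: it must be a genuine inequality in the space $(\Z,d)$ and not a pseudo-metric artefact. The identity $d(0,-1)=d(-1,-2)+c$ holds as real numbers and uses only equivariance, not (A1), and $c>0$ makes the difference nonzero. There is therefore no real obstacle; the proof consists of the three citations above, combined in one sentence each.
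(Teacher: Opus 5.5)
Your proposal is correct and takes the same approach as the paper. The paper's proof is exactly this assembly: Proposition~\ref{prop:qm} supplies the quasi-metric axioms, separation from (A1) and boundedness from (A2); Theorem~\ref{thm:hered} supplies hereditary precompactness; Proposition~\ref{prop:noncontr} supplies non-contractiveness and the strict jump $d(0,-1)=d(-1,-2)+c$; and bijectivity of the shift is immediate.
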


\begin{proof}
Propositions~\ref{prop:qm} and \ref{prop:noncontr} and
Theorem~\ref{thm:hered}.
\end{proof}

\begin{corollary}[The toll spiral over the circle]\label{cor:circle}
Let $\theta$ be irrational and $c > 0$. On $X = \Z$,
\[
d(n,m) \;=\; \|(n-m)\theta\| \;+\; \tau(n,m),
\]
with $\|\cdot\|$ the distance to the nearest integer, is a bounded,
hereditarily precompact quasi-metric, and the shift is a bijective
non-contractive map that is not an isometry. In particular,
Zavarzina's question
has a negative answer, a fortiori: the hypothesis of her question
asks only for precompactness, and this space is hereditarily
precompact.
\end{corollary}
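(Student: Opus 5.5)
The plan is to deduce the corollary from Theorem~\ref{thm:family} by exhibiting a concrete instance of its hypotheses. Take $Y = \R/\Z$ with the arc metric $\rho(s,t) = \|s - t\|$, where $\|\cdot\|$ is the distance to the nearest integer. This metric is well defined on the quotient and satisfies the triangle inequality, since $\|a+b\| \le \|a\| + \|b\|$. Let $\sigma(t) = t + \theta$ be the rotation and $y_0 = 0$, so that $\varphi(n) = n\theta \bmod 1$.

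First I check the structural hypotheses. The map $\sigma$ is a bijection with inverse $t \mapsto t - \theta$. It is an isometry because $\|(s+\theta)-(t+\theta)\| = \|s-t\|$. The base part becomes
\[
\rho(\varphi(n),\varphi(m)) = \|n\theta - m\theta\| = \|(n-m)\theta\|,
\]
so the toll spiral over $(Y,\rho,\sigma,0,c)$ has exactly the displayed $d$.

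Next I verify (A1) and (A2). For (A1), suppose $\sigma^k(0) = 0$ for some $k$. Then $k\theta \in \Z$, so $k = 0$ or $\theta = j/k$ is rational; irrationality of $\theta$ rules out the latter. For (A2), the orbit is a subset of $\R/\Z$. The circle is compact, being the continuous image of $[0,1]$ and metrized by $\rho$, so it is totally bounded, and subsets of totally bounded sets are totally bounded. If I want to avoid invoking compactness, I can argue directly: the $N$ points $j/N$ form a finite $1/N$-net of the circle.

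Theorem~\ref{thm:family} then gives every conclusion: $d$ is a bounded, hereditarily precompact quasi-metric, and the shift is a bijective non-contractive map that is not an isometry. For the "in particular" clause, note that hereditary precompactness, applied to the subspace $X$ itself, yields precompactness. So the toll spiral satisfies the hypothesis of Zavarzina's question while violating its conclusion.

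No step is a genuine obstacle; the only point requiring care is confirming that the arc metric is a genuine metric on $\R/\Z$ and rotation-invariant. Note also that total boundedness of the orbit does not even need irrationality; irrationality is used only for (A1).
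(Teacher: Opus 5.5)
Your proposal is correct and is essentially the paper's proof. Like the paper, you instantiate Theorem~\ref{thm:family} with $Y=\R/\Z$ under the arc (quotient) metric, $\sigma$ the rotation by $\theta$, and $y_0=0$; (A2) comes from compactness of the circle, (A1) from irrationality of $\theta$, and the base part is identified as $\|(n-m)\theta\|$.
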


\begin{proof}
Apply Theorem~\ref{thm:family} with $Y = \R/\Z$ carrying the
quotient metric, $\sigma$ the rotation $x \mapsto x + \theta$ (a
bijective isometry), and $y_0 = 0$. The orbit is contained in the
compact circle, hence totally bounded (A2); aperiodicity (A1) is
exactly irrationality of $\theta$: $\sigma^k 0 = 0$ means
$k\theta \in \Z$. The base part is the arc distance
$\rho(n\theta, m\theta) = \|(n-m)\theta\|$.
\end{proof}

\begin{corollary}[A non-Archimedean instance: the $p$-adic
odometer]
\label{cor:padic}
Let $p$ be prime and $c > 0$. On $X = \Z$,
\[
d(n,m) \;=\; |n-m|_p \;+\; \tau(n,m),
\]
with $|\cdot|_p$ the $p$-adic absolute value, is a bounded,
hereditarily precompact quasi-metric, and the shift is a
bijective non-contractive map that is not an isometry.
\end{corollary}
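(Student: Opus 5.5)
The plan is to apply Theorem~\ref{thm:family} with a suitable base, exactly as in the proof of Corollary~\ref{cor:circle}. Take $Y = \Z_p$, the ring of $p$-adic integers with the metric $\rho(x,y) = |x-y|_p$, take $\sigma$ to be the odometer $x \mapsto x+1$, and take $y_0 = 0$. Then $\varphi(n) = \sigma^n 0 = n$, viewed as an element of $\Z \subseteq \Z_p$. The base part is therefore $\rho(\varphi(n),\varphi(m)) = |n-m|_p$, and the displayed $d$ is literally the toll spiral over $(\Z_p, |\cdot|_p, \sigma, 0, c)$.

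I will check the hypotheses in this order.

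\emph{The base is a metric and $\sigma$ is a bijective isometry.} The map $|x-y|_p$ is an (ultra)metric on $\Z_p$. Translation by $1$ is a bijection with inverse $x \mapsto x-1$. It preserves distances because $|(x+1)-(y+1)|_p = |x-y|_p$.

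\emph{Aperiodicity (A1).} We have $\sigma^k 0 = k$, and $k \ne 0$ in $\Z_p$ whenever $k \ne 0$, since $\Z$ embeds in $\Z_p$. Equivalently, $|k|_p > 0$ for $k \ne 0$.

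\emph{Precompact orbit (A2).} The orbit is $\Z \subseteq \Z_p$. I can cite compactness of $\Z_p$, but to keep the argument self-contained I will exhibit explicit nets. Given $\varepsilon > 0$, choose $j$ with $p^{-j} < \varepsilon$. Every integer $n$ is congruent modulo $p^j$ to some $r \in \{0,\dots,p^j-1\}$. For that $r$, $|n-r|_p \le p^{-j} < \varepsilon$. So finitely many balls cover the orbit, and (A2) holds without appeal to ambient compactness, matching the remark after (A1)–(A2).

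Theorem~\ref{thm:family} then gives all the conclusions: bounded, hereditarily precompact, quasi-metric, and the shift is a bijective non-contractive map that is not an isometry. The bound is explicit: $d \le 1 + c$, since $|\cdot|_p \le 1$ on $\Z$.

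There is no real obstacle here. The only points needing care are that the formula's base part matches the scheme's $\rho\circ(\varphi\times\varphi)$, and that total boundedness is verified for the orbit itself, which is done by the residue-class net above.
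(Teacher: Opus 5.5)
Your proposal is correct and follows the paper's proof. Like the paper, you apply Theorem~\ref{thm:family} with $Y=\Z_p$, $\sigma(x)=x+1$, $y_0=0$, and get (A1) from $\Z\subseteq\Z_p$; the only difference is that the paper obtains (A2) from compactness of $\Z_p$, while you verify it directly with residue-class nets modulo $p^j$, which is equally valid and shows the ambient completion is not needed.
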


\begin{proof}
Apply Theorem~\ref{thm:family} with $Y = \Z_p$ (the $p$-adic
integers with the metric $|x - y|_p$), $\sigma(x) = x + 1$ (a
bijective isometry, the metric being translation-invariant), and
$y_0 = 0$. The orbit is $\Z \subseteq \Z_p$, totally bounded
since $\Z_p$ is compact (A2); aperiodicity (A1) holds since
$k \ne 0$ in $\Z$ remains nonzero in $\Z_p$. The base part is
$\rho(\sigma^n 0, \sigma^m 0) = |n-m|_p$.
\end{proof}

Both examples are translations of compact monothetic groups
($\R/\Z$ and $\Z_p$), though the scheme itself needs neither a
group structure nor compactness.

\section{The conjugate space, sharpness, and plasticity}
\label{sec:conjugate}

\begin{proposition}[The conjugate is not precompact]
\label{prop:conjugate}
Let $(\Z,d)$ be any toll spiral. Then $(\Z, d^{-1})$ is not
precompact: for $\varepsilon \le c$, no finite family of
$d^{-1}$-balls covers $\Z$.
\end{proposition}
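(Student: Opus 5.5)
The plan is to unwind the definition of a conjugate ball and then exploit the toll on the recent zone. The forward $d^{-1}$-ball at $x$ is
\[
B^{-1}(x,\varepsilon)=\{y\in\Z : d^{-1}(x,y)<\varepsilon\}=\{y : d(y,x)<\varepsilon\},
\]
the set of points from which passage \emph{to} $x$ costs less than $\varepsilon$. Fix $\varepsilon\le c$. If $y>x$ and $y\ge 0$, then $x<y$ with $y$ recent, so $\tau(y,x)=c$ and $d(y,x)\ge c\ge\varepsilon$. Hence every $y\in B^{-1}(x,\varepsilon)$ with $y\ge 0$ satisfies $y\le x$. In other words, the recent part of any conjugate $\varepsilon$-ball is contained in $\{0,1,\dots,x\}$; in particular it is finite, and it is empty when $x<0$.

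Now suppose, for contradiction, that a finite family $x_1,\dots,x_r\in\Z$ gives $\Z\subseteq\bigcup_i B^{-1}(x_i,\varepsilon)$. By the previous step,
\[
\{n\ge 0\}\cap\bigcup_i B^{-1}(x_i,\varepsilon)\subseteq\{0,1,\dots,\max(0,\max_i x_i)\}.
\]
Take $N=1+\max(0,\max_i x_i)$. Then $N\ge 0$ exceeds every center, so $d(N,x_i)\ge\tau(N,x_i)=c\ge\varepsilon$ for each $i$, and $N$ lies in no ball. This is the contradiction. It shows that $(\Z,d^{-1})$ is not precompact for every $\varepsilon\le c$, and in fact that the ascending ray $\{n\ge0\}$ alone cannot be covered.

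The argument uses only that $\rho\ge 0$ and the definition of $\tau$, so it needs neither (A1) nor (A2). There is no genuine obstacle. The one point needing care is orientation: under the paper's convention, the forward ball of $d^{-1}$ at $x$ is determined by $d(y,x)$ and not by $d(x,y)$. With the orientation reversed, the toll would instead fall on pairs whose source is the center, and the argument would not go through. I will state this reading of the ball explicitly at the start of the proof.
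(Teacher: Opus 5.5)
Your proof is correct and is essentially the paper's: you pick a nonnegative point above all centers (your $N = 1+\max(0,\max_i x_i)$ is the paper's $x > \max(A\cup\{0\})$), and the toll $\tau(N,x_i)=c$ then keeps it out of every conjugate ball of radius $\varepsilon \le c$. Your preliminary observation that each conjugate ball meets the recent zone in a finite set, and your remark that neither (A1) nor (A2) is used, are both correct, though the contradiction does not need them.
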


\begin{proof}
Let $A \subseteq \Z$ be finite and $\varepsilon \le c$. Choose
$x > \max(A \cup \{0\})$. For every $a \in A$ we have $a < x$ and
$x \ge 0$, so
\[
d^{-1}(a, x) = d(x, a) = \rho(\varphi(x),\varphi(a)) + c \ \ge\ c
\ \ge\ \varepsilon .
\]
Thus $x$ lies in no ball $B_{d^{-1}}(a,\varepsilon)$ with
$a \in A$.
\end{proof}

\begin{corollary}[Sharpness of the two-sided theorem]
\label{cor:sharp}
In Zavarzina's Theorem~3 \cite{Zav2023} (if $(X,d)$ and
$(X,d^{-1})$ are hereditarily precompact, every function expanding
some pair contracts another), the hypothesis on the conjugate
cannot be removed, even if the hypothesis on $(X,d)$ is
strengthened from precompact to hereditarily precompact and the
function is required to be a bijection. The toll spiral satisfies
the strengthened first hypothesis, fails the second in the
strongest way (the conjugate is not even precompact), and its
shift expands infinitely many pairs while contracting none.
\end{corollary}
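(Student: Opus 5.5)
The corollary is an assembly of results already proved, so the plan is to check each of its three clauses against Theorem~\ref{thm:family}, Proposition~\ref{prop:conjugate}, and Proposition~\ref{prop:noncontr}. I will work with any toll spiral satisfying (A1) and (A2); for definiteness, the circle instance of Corollary~\ref{cor:circle}. Theorem~\ref{thm:family} gives that $(\Z,d)$ is a bounded, hereditarily precompact quasi-metric space and that $S$ is a bijection. This is the strengthened first hypothesis: hereditary precompactness in place of precompactness, with the function required to be bijective.

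For the failure of the second hypothesis, I invoke Proposition~\ref{prop:conjugate}: for $\varepsilon \le c$, no finite family of $d^{-1}$-balls covers $\Z$. So $(\Z,d^{-1})$ is not precompact. Since $\Z$ is a subspace of itself, it is then a fortiori not hereditarily precompact. This is the ``strongest way'' in the statement.

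For the dynamical clause, non-contractiveness of $S$ (Proposition~\ref{prop:noncontr}) means $d(Sn,Sm)\ge d(n,m)$ for every pair, so $S$ contracts no pair. To show that infinitely many pairs are expanded, I will use the ladder remark, or directly equivariance \eqref{eq:equiv} together with Lemma~\ref{lem:tollmono}. For a one-step expansion we need $\tau(n+1,m+1)>\tau(n,m)$, which happens exactly when $m<n$ and $n=-1$. Thus every pair $(-1,m)$ with $m<-1$ satisfies
\[
d(0,m+1)=d(-1,m)+c,
\]
and these pairs are infinitely many and pairwise distinct. Hence $S$ expands infinitely many pairs while contracting none.

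Finally I combine the clauses. If the conjugate hypothesis could be removed from Zavarzina's Theorem~3, then every function on a hereditarily precompact space that expands a pair would have to contract another. $S$ is a bijective counterexample to that conclusion, so the hypothesis cannot be removed. The only point needing care is the reading of ``expanding some pair'' as strict inequality for a single application of the map; the displayed identity supplies exactly that. No step is a real obstacle.
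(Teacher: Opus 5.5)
Your proposal is correct and follows the paper's own route: the corollary is an assembly of Theorem~\ref{thm:family} (the space is hereditarily precompact and the shift is a bijection), Proposition~\ref{prop:conjugate} (the conjugate is not precompact), and the non-contractiveness and strictness of Proposition~\ref{prop:noncontr}. Your one-step characterization, via equivariance and Lemma~\ref{lem:tollmono}, that the pairs expanded by a single application of $S$ are exactly the pairs $(-1,m)$ with $m<-1$ is a precise way to certify ``infinitely many pairs.'' By comparison, Proposition~\ref{prop:noncontr} states the rise by $c$ accumulated over $|n|$ steps.
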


\begin{remark}
Zavarzina's own Theorem~2 already shows the conjugate hypothesis
cannot be dropped for \emph{non-bijective} maps: her example is
hereditarily precompact and its map non-contractive but not an
isometry. What Corollary~\ref{cor:sharp} adds is the bijective
case, under the strengthened first hypothesis.
Remark~\ref{rem:blindspot} and
Proposition~\ref{prop:conjugate} are the same computation seen from
the two sides: the family $\{n \ge 0\}$ with its uniform reversed
gaps is invisible to the left K-Cauchy criterion for $d$ and is
precisely what destroys precompactness for $d^{-1}$. The two-sided
hypothesis of Theorem~3 in \cite{Zav2023} is not a technical
convenience; it is exactly the assumption that detects the toll.
\end{remark}

\begin{corollary}[Failure of expand-contract plasticity]
\label{cor:plastic}
The toll spiral is a bounded, hereditarily precompact quasi-metric
space that is not expand-contract plastic: the inverse shift
$S^{-1}$ is a non-expansive bijection of $(\Z,d)$ onto itself that
is not an isometry.
\end{corollary}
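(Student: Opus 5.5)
The plan is to deduce the corollary directly from Theorem~\ref{thm:family} together with the equivalence recorded in Section~\ref{sec:prelim}: for a bijection $F$ of $X$ onto itself, $F$ is non-contractive if and only if $F^{-1}$ is non-expansive. Boundedness and hereditary precompactness of the toll spiral are already part of Theorem~\ref{thm:family}, so only the statement about $S^{-1}$ needs an argument.

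First, $S^{-1}(n) = n-1$ is a bijection of $\Z$ onto itself, since $S$ is. For non-expansiveness, fix $n,m \in \Z$ and apply non-contractiveness of $S$ (Proposition~\ref{prop:noncontr}) to the pair $(n-1,m-1)$. This gives
\[
d(n,m) = d\bigl(S(n-1), S(m-1)\bigr) \ \ge\ d(n-1,m-1) = d\bigl(S^{-1}n, S^{-1}m\bigr),
\]
which is exactly the non-expansive inequality for $S^{-1}$.

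For failure of isometry, I would reuse the explicit pair from Proposition~\ref{prop:noncontr}:
\[
d\bigl(S^{-1}0, S^{-1}(-1)\bigr) = d(-1,-2) = d(0,-1) - c < d(0,-1),
\]
and this inequality is strict because $c > 0$. So $S^{-1}$ strictly contracts the pair $(0,-1)$ and is not an isometry. By definition, then, the space is not expand-contract plastic: it carries a non-expansive bijection onto itself that is not an isometry.

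No step is a genuine obstacle. The only point needing care is orientation: the pair $(0,-1)$ keeps its first and second coordinates in the same order when $S^{-1}$ is applied, so the toll sits on the correct side. Here $\tau(0,-1) = c$ while $\tau(-1,-2) = 0$, and equivariance~\eqref{eq:equiv} makes the base parts equal.
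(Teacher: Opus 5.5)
Your proof is correct and matches the paper's. Non-expansiveness of $S^{-1}$ comes from applying Proposition~\ref{prop:noncontr} at the pair $(n-1,m-1)$, and failure of isometry comes from the exact jump $d(-1,-2) = d(0,-1) - c$ at the pair $(0,-1)$; as the paper notes, the strict inequality $d(-1,-2) < d(0,-1)$ alone would already suffice.
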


\begin{proof}
For all $n,m$: $d(S^{-1}n, S^{-1}m) = d(n-1, m-1) \le d(n,m)$ by
Proposition~\ref{prop:noncontr} applied at the pair
$(n-1, m-1)$; and equality fails at $(0,-1)$, since
$d(-1,-2) = d(0,-1) - c$ (the exact-jump clause of
Proposition~\ref{prop:noncontr}, established on paper; the
machine-checked strict inequality alone would suffice).
\end{proof}

\section{Anatomy: the structure behind the counterexample}
\label{sec:anatomy}

This section develops the structural theory behind the
construction: the invariant limit register, the covering
inequality, and the monochromy theorem, each with full proof;
one closing remark is informal and marked as such, followed by
a delimiting observation
(Proposition~\ref{prop:conserve}, with full proof). Throughout,
$(X,d)$ is a quasi-pseudo-metric space and
$F\colon X \to X$ a bijective non-contractive map; separation is
not assumed unless stated. Nothing in
Sections~\ref{sec:construction}--\ref{sec:conjugate} depends on
this section.

\subsection{The invariant limit register}

\begin{theorem}[The invariant limit register]\label{thm:register}
For $x,y \in X$ let $\lambda_k(x,y) = d(F^k x, F^k y)$,
$k \in \Z$. Then:
\begin{itemize}
\item[(i)] each ladder $k \mapsto \lambda_k(x,y)$ is nondecreasing
and nonnegative, so the limit
$\lambda^-(x,y) := \lim_{k \to -\infty} \lambda_k(x,y)$ exists;
\item[(ii)] $\lambda^-$ is a quasi-pseudo-metric on $X$ with
$\lambda^- \le d$;
\item[(iii)] $F$ is a bijective $\lambda^-$-\emph{isometry}:
$\lambda^-(Fx, Fy) = \lambda^-(x,y)$;
\item[(iv)] if $(X,d)$ is precompact, respectively hereditarily
precompact, so is $(X, \lambda^-)$.
\end{itemize}
Write $D_0 := d - \lambda^- \ge 0$ for the \emph{defect}. All
strictness is carried by the defect: if
$d(Fx,Fy) > d(x,y)$ then $D_0(Fx, Fy) > 0$.
\end{theorem}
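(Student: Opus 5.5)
The proof will proceed item by item, with everything resting on the ladder relation from Section~\ref{sec:prelim}.

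For (i), non-contractiveness at the pair $(F^k x, F^k y)$ gives $\lambda_k \le \lambda_{k+1}$ for every $k \in \Z$. So as $k \to -\infty$ the sequence $\lambda_k(x,y)$ is nonincreasing and bounded below by $0$. Its limit therefore exists and equals $\inf_k \lambda_k(x,y)$.

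For (ii), the bound $\lambda^- \le \lambda_0 = d$ is immediate. The property $\lambda^-(x,x)=0$ holds because every $\lambda_k(x,x)=0$. For the triangle inequality, I write $\lambda_k(x,z) \le \lambda_k(x,y)+\lambda_k(y,z)$, which is the triangle inequality for $d$ at the points $F^kx, F^ky, F^kz$, and then let $k \to -\infty$. Limits preserve non-strict inequalities and addition, so $\lambda^-$ is a quasi-pseudo-metric. Separation is not claimed and may fail.

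For (iii), the key observation is $\lambda_k(Fx,Fy)=\lambda_{k+1}(x,y)$. This is a reindexing of the same two-sided sequence, so the two limits at $-\infty$ coincide. Bijectivity of $F$ is given.

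For (iv), there are two cases.
\begin{itemize}
\item \emph{Precompactness.} Since $\lambda^- \le d$, every forward $d$-ball $B_d(x,\varepsilon)$ is contained in the corresponding $\lambda^-$-ball. Hence a finite $d$-$\varepsilon$-net is also a $\lambda^-$-$\varepsilon$-net.
\item \emph{Hereditary precompactness.} The same inclusion applied inside any subspace $W$ works, because a net inside $W$ for $d|_W$ is a net for $\lambda^-|_W$. Alternatively, any left K-Cauchy subsequence for $d$ is left K-Cauchy for the smaller $\lambda^-$. I prefer the direct net argument, since it avoids \cite[Theorem~3]{KMRV1993} and any worry about separation in that characterization.
\end{itemize}

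For the defect clause, suppose $d(Fx,Fy) > d(x,y)$. By (iii) and (ii),
\[
\lambda^-(Fx,Fy) = \lambda^-(x,y) \le d(x,y) < d(Fx,Fy),
\]
so $D_0(Fx,Fy) = d(Fx,Fy) - \lambda^-(Fx,Fy) > 0$.

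There is no real obstacle. The only point needing care is to treat the ladder as a genuinely two-sided sequence indexed by $\Z$, which uses the bijectivity of $F$ to define $F^k$ for $k<0$. Then the shift identity in (iii) is an exact reindexing and not merely an inequality.
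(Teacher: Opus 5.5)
Your proposal is correct and matches the paper's proof essentially step for step. You use the ladder relation for (i), pass to the limit in the triangle inequality with $\lambda^- = \inf_k \lambda_k \le \lambda_0 = d$ for (ii), reindex via $\lambda_k(Fx,Fy) = \lambda_{k+1}(x,y)$ for (iii), use ball inclusion for (iv) (with the left K-Cauchy remark as an aside), and close the defect clause with $\lambda^-(Fx,Fy) = \lambda^-(x,y) \le d(x,y) < d(Fx,Fy)$, which is the paper's argument as well.
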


\begin{proof}
(i) is the ladder relation of Section~\ref{sec:prelim} plus the
bound $\lambda_k \ge 0$. (ii): $\lambda^-(x,x) = 0$; the triangle inequality
passes to the limit in
$\lambda_k(x,z) \le \lambda_k(x,y) + \lambda_k(y,z)$; and
$\lambda^- = \inf_{k \le 0} \lambda_k \le \lambda_0 = d$.
(iii): $\lambda^-(Fx,Fy) = \lim_k \lambda_{k+1}(x,y) =
\lambda^-(x,y)$. (iv): since $\lambda^- \le d$, every $d$-ball is
contained in the $\lambda^-$-ball of the same center and radius, so
finite $d$-nets are $\lambda^-$-nets, for every subspace; on the
sequential side, left K-Cauchy for $d$ implies left K-Cauchy for
$\lambda^-$. The last claim: if $\lambda_0(x,y) < \lambda_1(x,y)$
then $D_0(Fx,Fy) = \lambda_1 - \lambda^- \ge \lambda_1 - \lambda_0
> 0$.
\end{proof}

Every bijective non-contractive map thus carries
a canonical $F$-isometric shadow: a coarser register in which the
dynamics is rigid, with the entire failure of rigidity concentrated
in the defect $D_0$. In the toll spiral the register is computed
exactly: running any pair backward eventually sheds the toll
(Lemma~\ref{lem:tollmono}), so $\lambda^-$ \emph{is} the base
metric $\rho(\varphi(n),\varphi(m))$, and the defect \emph{is} the
toll. The construction realizes the abstract decomposition
$d = \lambda^- + D_0$ and makes both parts explicit.

\subsection{The covering inequality}

\begin{proposition}[Recurrence fees cover the defect]
\label{prop:covering}
For any $x, y \in X$ and $m \ge 1$, write
$r_m = d(x, F^{-m}x)$ and $s_m = d(F^{-m}y, y)$. Then
\[
r_m + s_m \;\ge\; \lambda_0(x,y) - \lambda_{-m}(x,y),
\qquad\text{hence}\qquad
\liminf_{m \to \infty}\, (r_m + s_m) \;\ge\; D_0(x,y).
\]
\end{proposition}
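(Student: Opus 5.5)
The plan is a single application of the triangle inequality, followed by a passage to the limit. Write $x' = F^{-m}x$ and $y' = F^{-m}y$, so that $\lambda_{-m}(x,y) = d(x',y')$ and $\lambda_0(x,y) = d(x,y)$.

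The detour $x \to x' \to y' \to y$ gives, by two applications of axiom (b),
\[
d(x,y) \;\le\; d(x, F^{-m}x) + d(F^{-m}x, F^{-m}y) + d(F^{-m}y, y)
\;=\; r_m + \lambda_{-m}(x,y) + s_m .
\]
Rearranging yields the first inequality, $r_m + s_m \ge \lambda_0(x,y) - \lambda_{-m}(x,y)$. Note that no non-contractiveness is used in this step; only the triangle inequality and the definitions are needed. The orientation of the fees matters: $r_m$ departs from $x$ and $s_m$ arrives at $y$, exactly as in the detour.

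For the limit statement, Theorem~\ref{thm:register}(i) says the ladder $k \mapsto \lambda_k(x,y)$ is nondecreasing and bounded below by $0$. Hence $\lambda_{-m}(x,y)$ decreases to $\lambda^-(x,y)$ as $m \to \infty$, and the right-hand side $\lambda_0 - \lambda_{-m}$ converges (increasingly) to $\lambda_0(x,y) - \lambda^-(x,y) = D_0(x,y)$. Taking $\liminf_{m\to\infty}$ of both sides of the first inequality then gives
\[
\liminf_{m\to\infty}(r_m + s_m) \;\ge\; \lim_{m\to\infty}\bigl(\lambda_0 - \lambda_{-m}\bigr) \;=\; D_0(x,y).
\]

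There is no real obstacle here. The only points needing care are choosing the detour so that each fee has the orientation stated, and invoking the existence of $\lambda^-$ from Theorem~\ref{thm:register} so that the liminf of the right-hand side is a genuine limit. In fact the monotonicity gives slightly more: $r_m + s_m \ge D_0(x,y) - (\lambda_{-m}(x,y) - \lambda^-(x,y))$ for every $m$, with the error term decreasing to $0$.
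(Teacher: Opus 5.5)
Your proof is correct and matches the paper's argument: the same detour $x \to F^{-m}x \to F^{-m}y \to y$ with two triangle inequalities. Your passage to the limit, via the monotone convergence $\lambda_{-m}(x,y) \downarrow \lambda^-(x,y)$ from Theorem~\ref{thm:register}(i), spells out the ``hence'' step that the paper leaves implicit.
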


\begin{proof}
Two applications of the triangle inequality:
\[
d(x,y) \le d(x, F^{-m}x) + d(F^{-m}x, F^{-m}y) + d(F^{-m}y, y)
= r_m + \lambda_{-m}(x,y) + s_m . \qedhere
\]
\end{proof}

Although elementary, the inequality rules out a natural class
of constructions: any construction in which both backward
self-recurrence fees $r_m, s_m$ can be made small along a common
subsequence, at a pair with positive defect, contradicts it. A
strict pair must \emph{pay for its own past}: at least one endpoint
owes a backward recurrence cost bounded below by the defect. In the
toll spiral the debt is visible: for a recent point $x \ge 0$,
every backward distance $d(x, x - m)$ carries the toll, so
$r_m \ge c$ at every backward depth $m$.

\subsection{The monochromy theorem}

Fix a strict pair $(a,b)$ with defect
$\eta := D_0(a,b) > 0$, and for $\Delta \ge 1$ define the
\emph{backward fees} and \emph{self-defects}
\[
r(\Delta) = d(a, F^{-\Delta}a), \qquad
s(\Delta) = d(F^{-\Delta}b, b),
\]
\[
\delta_a(\Delta) = r(\Delta) - \lambda^-(a, F^{-\Delta}a) \ge 0,
\qquad
\delta_b(\Delta) = s(\Delta) - \lambda^-(F^{-\Delta}b, b) \ge 0.
\]
Call a set of displacements \emph{chain-closed} when it consists
of the interval sums $t_j - t_i$ ($i<j$) of an infinite chain
$t_1 < t_2 < \cdots$ in $\N$; such a set is closed under addition
of adjacent intervals.

\begin{lemma}[Subadditivity of the fees]\label{lem:subadd}
$r(\Delta+\Delta') \le r(\Delta) + r(\Delta')$ and
$s(\Delta+\Delta') \le s(\Delta) + s(\Delta')$ for all
$\Delta, \Delta' \ge 1$.
\end{lemma}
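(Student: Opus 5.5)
The plan is one triangle inequality through an intermediate point on the backward orbit, followed by a single application of the ladder relation to move that point forward in time.

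For the fee $r$, I take $F^{-\Delta}a$ as the intermediate point on the way from $a$ to $F^{-(\Delta+\Delta')}a$. The triangle inequality (axiom (b)) gives
\[
r(\Delta+\Delta') = d\bigl(a, F^{-\Delta-\Delta'}a\bigr) \le d\bigl(a, F^{-\Delta}a\bigr) + d\bigl(F^{-\Delta}a, F^{-\Delta-\Delta'}a\bigr).
\]
The first term is $r(\Delta)$. The second term is $\lambda_{-\Delta}(a, F^{-\Delta'}a)$ in the notation of Section~\ref{sec:prelim}. The ladder relation $\lambda_k \le \lambda_{k+1}$, iterated $\Delta$ times from $k=-\Delta$ to $k=0$, bounds it by $\lambda_0(a, F^{-\Delta'}a) = d(a, F^{-\Delta'}a) = r(\Delta')$. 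Equivalently, I can apply non-contractiveness $\Delta$ times to the pair $(F^{-\Delta}a, F^{-\Delta-\Delta'}a)$, which uses the bijectivity of $F$ so that negative powers make sense. This gives $r(\Delta+\Delta') \le r(\Delta) + r(\Delta')$.

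For $s$, I proceed symmetrically, with the intermediate point $F^{-\Delta'}b$:
\[
s(\Delta+\Delta') \le d\bigl(F^{-\Delta-\Delta'}b, F^{-\Delta'}b\bigr) + d\bigl(F^{-\Delta'}b, b\bigr).
\]
The second term is $s(\Delta')$. The first term is $\lambda_{-\Delta'}(F^{-\Delta}b, b)$, and the ladder relation bounds it by $\lambda_0(F^{-\Delta}b,b) = s(\Delta)$.

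There is no real obstacle. The only point to watch is the orientation in the asymmetric setting: the intermediate point must be chosen so that the non-$F$-shifted segment is an exact fee, and the other segment is a past copy of a fee rather than a future one. Non-contractiveness only lets us bound earlier distances by later ones, so pushing a pair forward in time is the legitimate direction. I would check that in each case the shifted segment is the earlier copy, which is why the splits differ for $r$ and $s$: the point nearer $a$ in one case, and $F^{-\Delta'}b$ in the other. The argument uses neither separation nor the strictness of $(a,b)$, so it holds for all $\Delta,\Delta' \ge 1$.
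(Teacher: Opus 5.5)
Your proof is correct and matches the paper's: you use the same splits (at $F^{-\Delta}a$ for $r$, at $F^{-\Delta'}b$ for $s$), identify the shifted segment as $\lambda_{-\Delta}(a,F^{-\Delta'}a)$, respectively $\lambda_{-\Delta'}(F^{-\Delta}b,b)$, and bound it by the ladder relation. Your remark on orientation, that non-contractiveness only licenses pushing a pair forward in time, is exactly the point the paper's argument relies on.
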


\begin{proof}
By the triangle inequality,
\[
d(a, F^{-\Delta-\Delta'}a) \;\le\; d(a, F^{-\Delta}a) +
d(F^{-\Delta}a, F^{-\Delta-\Delta'}a),
\]
and the second term is $\lambda_{-\Delta}(a, F^{-\Delta'}a) \le
\lambda_0(a, F^{-\Delta'}a) = r(\Delta')$ by the ladder
relation. For $s$, split at $F^{-\Delta'}b$ symmetrically:
\[
d(F^{-\Delta-\Delta'}b, b) \;\le\;
\lambda_{-\Delta'}(F^{-\Delta}b, b) + s(\Delta') \;\le\;
s(\Delta) + s(\Delta'). \qedhere
\]
\end{proof}

\begin{lemma}[The floor]\label{lem:floor}
For every $\delta > 0$ there is $M$ with
$r(\Delta) + s(\Delta) \ge \eta - \delta$ for all $\Delta \ge M$.
\end{lemma}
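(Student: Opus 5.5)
This follows directly from the covering inequality (Proposition~\ref{prop:covering}) applied at the pair $(x,y) = (a,b)$ with $m = \Delta$. With the notation of the monochromy subsection, that proposition gives, for every $\Delta \ge 1$,
\[
r(\Delta) + s(\Delta) \;\ge\; \lambda_0(a,b) - \lambda_{-\Delta}(a,b).
\]
The whole task is therefore to show that the right-hand side eventually exceeds $\eta - \delta$, uniformly for all large $\Delta$.

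For that, use Theorem~\ref{thm:register}(i). The ladder $k \mapsto \lambda_k(a,b)$ is nondecreasing and converges to $\lambda^-(a,b)$ as $k \to -\infty$. Given $\delta > 0$, choose $M \ge 1$ with $\lambda_{-M}(a,b) \le \lambda^-(a,b) + \delta$. For any $\Delta \ge M$, monotonicity of the ladder gives
\[
\lambda_{-\Delta}(a,b) \le \lambda_{-M}(a,b) \le \lambda^-(a,b) + \delta,
\]
since $-\Delta \le -M$. Substituting this into the covering inequality and recalling $\lambda_0(a,b) = d(a,b)$ yields
\[
r(\Delta) + s(\Delta) \;\ge\; d(a,b) - \lambda^-(a,b) - \delta \;=\; D_0(a,b) - \delta \;=\; \eta - \delta,
\]
which is the claim.

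There is no real obstacle here. The only point to be careful about is the direction of monotonicity: the estimate must hold for every $\Delta \ge M$, not merely along a subsequence. This is exactly where the ladder relation is used, in place of the $\liminf$ form of Proposition~\ref{prop:covering}: because $\lambda_{-\Delta}$ decreases toward $\lambda^-$ as $\Delta$ grows, a single threshold $M$ serves for all larger displacements. Note also that strictness, $\eta > 0$, plays no role in the floor itself.
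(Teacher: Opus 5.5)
Your proof is correct and follows the paper's argument: apply the covering inequality (Proposition~\ref{prop:covering}) at $(a,b)$ with $m=\Delta$, then use the monotone convergence $\lambda_{-\Delta}(a,b) \downarrow \lambda^-(a,b)$ to get a single threshold $M$ that works for all $\Delta \ge M$. You spell out the choice of $M$ explicitly, whereas the paper leaves it implicit.
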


\begin{proof}
Proposition~\ref{prop:covering} gives
$d(a,b) \le r(\Delta) + \lambda_{-\Delta}(a,b) + s(\Delta)$, and
$\lambda_{-\Delta}(a,b) \downarrow \lambda^-(a,b)$ as
$\Delta \to \infty$.
\end{proof}

\begin{lemma}[Common recurrence chain]\label{lem:chain}
Assume $(X,d)$ hereditarily precompact. For every
$\varepsilon > 0$ there is an infinite chain
$t_1 < t_2 < \cdots$ such that for all $i < j$, with
$\Delta = t_j - t_i$:
$\lambda^-(a, F^{-\Delta}a) < \varepsilon$ and
$\lambda^-(F^{-\Delta}b, b) < \varepsilon$.
\end{lemma}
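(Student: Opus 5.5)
We will use the sequential characterization of hereditary precompactness, applied to the backward orbits of $a$ and $b$ together. Concretely, we consider the sequence $k \mapsto (F^{-k}a, F^{-k}b)$ and extract from it a single subsequence that is left K-Cauchy for both components. The differences of its indices then supply the displacements, and the $F$-invariance of $\lambda^-$ (Theorem~\ref{thm:register}(iii)) lets us move each near-return back to the base points $a$ and $b$.

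\emph{Step 1 (joint extraction).} By hypothesis $(X,d)$ is hereditarily precompact, and by Theorem~\ref{thm:register}(iv) so is $(X,\lambda^-)$. By \cite[Theorem~3]{KMRV1993}, the sequence $x_k = F^{-k}a$, $k \ge 1$, has a left K-Cauchy subsequence for $\lambda^-$; we could equally use $d$, since $\lambda^- \le d$. We then pass to a further subsequence along which $y_k = F^{-k}b$ is left K-Cauchy. Left K-Cauchyness survives passage to subsequences, because the order of indices is preserved. Discarding an initial segment, we obtain indices $k_1 < k_2 < \cdots$ such that for all $i<j$:
\[
\lambda^-(F^{-k_i}a, F^{-k_j}a) < \varepsilon
\qquad\text{and}\qquad
\lambda^-(F^{-k_i}b, F^{-k_j}b) < \varepsilon .
\]

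\emph{Step 2 (transport by invariance).} Put $\Delta = k_j - k_i > 0$. Applying $F^{k_i}$ to both arguments and using (iii), the first inequality becomes $\lambda^-(a, F^{-\Delta}a) < \varepsilon$, which is exactly the $a$-condition. The $b$-condition needs the orientation $\lambda^-(F^{-\Delta}b, b)$, and this is the one delicate point: the source must be the deeper point. Transporting the second inequality by $F^{k_i}$ instead gives $\lambda^-(b, F^{-\Delta}b) < \varepsilon$, which has the wrong orientation.

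\emph{Step 3 (fixing the $b$-orientation).} We resolve this by running the $b$-sequence in the opposite direction, so that the left K-Cauchy order matches the orientation we need. Take $y_k = F^{k}b$ instead. The left K-Cauchy condition then gives, for $i<j$,
\[
\lambda^-(F^{k_i}b, F^{k_j}b) < \varepsilon .
\]
Applying $F^{-k_j}$ turns this into $\lambda^-(F^{-\Delta}b, b) < \varepsilon$, as required. So Step 1 is run on the paired sequence $(F^{-k}a, F^{k}b)$: first extract along the $a$-component, then along the $b$-component within that subsequence. Setting $t_i = k_i$ produces the chain.

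The main obstacle is exactly this orientation bookkeeping. The left K-Cauchy criterion controls only forward-in-sequence distances, so the two endpoints have to be sampled along opposite time directions. Once that choice is made, the rest is routine subsequence extraction.
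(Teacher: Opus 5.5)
Your proof is correct and is essentially the paper's argument: from the paired sequence $(F^{-t}a, F^{t}b)$ you extract a subsequence left K-Cauchy in both coordinates, drop a finite head, and transport by the $F$-invariance of $\lambda^-$, applying $F^{t_i}$ to the $a$-inequality and $F^{-t_j}$ to the $b$-inequality. The only cosmetic difference is that you extract directly in $(X,\lambda^-)$ via Theorem~\ref{thm:register}(iv), whereas the paper extracts $d$-left K-Cauchy subsequences and then uses $\lambda^- \le d$; you note this alternative yourself.
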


\begin{proof}
By hereditary precompactness of $(X,d)$, used through the
sequential direction of the characterization
\cite[Theorem~3]{KMRV1993} (every sequence admits a left K-Cauchy
subsequence), extract from the
sequence $w_t = (F^{-t}a,\, F^{t}b)$, $t \in \N$, a subsequence
$d$-left K-Cauchy in the first coordinate, and from it one
$d$-left K-Cauchy in the second; since $\lambda^- \le d$, both
coordinates are $\lambda^-$-left K-Cauchy. Drop a finite head: for all $i < j$,
$\lambda^-(F^{-t_i}a, F^{-t_j}a) < \varepsilon$ and
$\lambda^-(F^{t_i}b, F^{t_j}b) < \varepsilon$. Transport by
$\lambda^-$-invariance (Theorem~\ref{thm:register}(iii)):
applying $F^{t_i}$ to the first gives
$\lambda^-(a, F^{-\Delta}a) < \varepsilon$; applying $F^{-t_j}$
to the second gives $\lambda^-(F^{-\Delta}b, b) < \varepsilon$.
\end{proof}

\begin{samepage}
\begin{theorem}[Monochromy: bijective strictness cannot
alternate]\label{thm:monochromy}
Let $(X,d)$ be hereditarily precompact, $F$ bijective
non-contractive, and $(a,b)$ strict with $\eta = D_0(a,b) > 0$.
For every $\varepsilon \in (0, \eta/6]$ there exist an infinite
chain-closed displacement set $\mathcal{D}$ and one endpoint
($a$, say, after relabeling) with
\[
\delta_a(\Delta) \;>\; \frac{\eta - \varepsilon}{2} -
\varepsilon \;\ge\; \frac{\eta}{4}
\qquad \text{for every } \Delta \in \mathcal{D}:
\]
one orbit carries the defect uniformly on a chain-closed set,
while its $\lambda^-$-returns at the same displacements are
within $\varepsilon$.
\end{theorem}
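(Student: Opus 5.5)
The plan is to combine the floor (Lemma~\ref{lem:floor}) with the common recurrence chain (Lemma~\ref{lem:chain}). The floor forces, at every large displacement, one of the two fees to be at least half the defect. The chain makes the $\lambda^-$-parts of both fees smaller than $\varepsilon$. Finally, the infinite Ramsey theorem selects a single endpoint uniformly along a sub-chain. The subadditivity lemma is not needed for this statement.

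\emph{Step 1 (chain and floor).} Fix $\varepsilon \in (0,\eta/6]$. Apply Lemma~\ref{lem:floor} with $\delta = \varepsilon$ to get $M$ such that $r(\Delta)+s(\Delta) \ge \eta-\varepsilon$ for all $\Delta \ge M$. Apply Lemma~\ref{lem:chain} with the same $\varepsilon$ to get a chain $t_1<t_2<\cdots$. Thin it so that consecutive gaps are at least $M$, for instance by keeping every $M$-th term. Any difference $t_j-t_i$ of the thinned chain is a difference of the original chain, so the conclusion of Lemma~\ref{lem:chain} persists. Moreover every such difference is now $\ge M$.

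\emph{Step 2 (colouring and Ramsey).} For $i<j$ put $\Delta = t_j - t_i$. By Step~1, $r(\Delta)+s(\Delta) \ge \eta-\varepsilon$, so at least one of $r(\Delta)$, $s(\Delta)$ is $\ge (\eta-\varepsilon)/2$. Colour the pair $\{i,j\}$ ``$a$'' if $r(\Delta) \ge (\eta-\varepsilon)/2$ and ``$b$'' otherwise, in which case $s(\Delta) \ge (\eta-\varepsilon)/2$. By the infinite Ramsey theorem for pairs, there is an infinite set of indices $i_1<i_2<\cdots$ whose pairs are all of one colour; after relabeling, call this colour $a$. Take $\mathcal{D} = \{t_{i_l}-t_{i_k} : k<l\}$. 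This set is chain-closed by definition, with chain $(t_{i_k})$, and it is infinite.

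\emph{Step 3 (the estimate).} For $\Delta \in \mathcal{D}$, Lemma~\ref{lem:chain} gives $\lambda^-(a,F^{-\Delta}a) < \varepsilon$, which is the final clause of the statement. Hence
\[
\delta_a(\Delta) = r(\Delta) - \lambda^-(a,F^{-\Delta}a) > \tfrac{\eta-\varepsilon}{2} - \varepsilon .
\]
Since $\varepsilon \le \eta/6$,
\[
\tfrac{\eta-\varepsilon}{2} - \varepsilon = \tfrac{\eta}{2} - \tfrac{3\varepsilon}{2} \ge \tfrac{\eta}{2} - \tfrac{\eta}{4} = \tfrac{\eta}{4}.
\]
In the colour-$b$ case the same computation applies with $s$, $\delta_b$ and $\lambda^-(F^{-\Delta}b,b)$.

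The main obstacle is Step~2. A pairwise choice of endpoint does not give a uniform one, and Ramsey's theorem is what converts the alternating colouring into a monochromatic chain-closed set. The one point needing care is that passing to sub-chains, both in the thinning and in the Ramsey step, preserves the bounds from Lemmas~\ref{lem:floor} and~\ref{lem:chain}. It does, because each sub-chain difference is an original chain difference and is still at least $M$.
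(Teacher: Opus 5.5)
Your proof is correct and follows the paper's argument step for step. It takes the floor threshold $M$ for $\delta=\varepsilon$ and the common recurrence chain thinned to gaps at least $M$. It then two-colours pairs by which fee reaches $(\eta-\varepsilon)/2$, applies the infinite Ramsey theorem, and subtracts the $\lambda^-$-bound from Lemma~\ref{lem:chain}; your arithmetic check that $\tfrac{\eta-\varepsilon}{2}-\varepsilon\ge\tfrac{\eta}{4}$ and your remark that subadditivity is not used are both accurate.
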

\end{samepage}

\begin{proof}
Take Lemma~\ref{lem:chain}'s chain for this $\varepsilon$, and
let $M$ be Lemma~\ref{lem:floor}'s threshold for
$\delta = \varepsilon$. Pass to a subchain whose consecutive
gaps are at least $M$ (possible since the chain is infinite),
so that every interval sum $\Delta$ of the subchain
satisfies $\Delta \ge M$, hence
$r(\Delta) + s(\Delta) \ge \eta - \varepsilon$, while
Lemma~\ref{lem:chain}'s conclusion persists on the sub-pairs. Color the pair $\{i,j\}$ ($i<j$) with
$\mathsf{a}$ if $r(t_j - t_i) \ge (\eta-\varepsilon)/2$, and
otherwise with $\mathsf{b}$, in which case
$s(t_j - t_i) \ge (\eta-\varepsilon)/2$ by the floor. The
infinite Ramsey theorem for pairs yields an infinite monochrome
$H \subseteq \N$; say the color is $\mathsf{a}$ (relabel
otherwise). The interval sums of the subchain
$\{t_h : h \in H\}$ are interval sums of the original chain, so
they form a chain-closed set $\mathcal{D}$ on which
$r(\Delta) \ge (\eta-\varepsilon)/2$ and, by
Lemma~\ref{lem:chain},
$\lambda^-(a, F^{-\Delta}a) < \varepsilon$; hence
$\delta_a(\Delta) > (\eta-\varepsilon)/2 - \varepsilon$.
\end{proof}

\begin{remark}[No co-avoidance]\label{rem:noalt}
No strict pair realizes an alternating fee pattern: a parity
scheme providing two adjacent intervals with
$r \le \varepsilon'$ whose sum falls where $s \le \varepsilon'$
forces $r(\mathrm{sum}) \le 2\varepsilon'$ by
Lemma~\ref{lem:subadd}, hence
$\eta - \delta \le r + s \le 3\varepsilon'$ at the sum by
Lemma~\ref{lem:floor} (the intervals taken large enough for the
floor) --- impossible once $3\varepsilon' + \delta < \eta$.
\end{remark}

\begin{remark}[The design space, informally]\label{rem:design}
This remark is heuristic; nothing in the paper depends on it.
We found the construction by eliminating alternatives, and the
eliminations shared one mechanism: a design that grades its
fees by a time-monotone quantity (index, height, arrival depth)
is forced by the ladder relation to shed those fees in the deep
past, and then either rigidity is restored because the fees are
conserved, separation collapses in the limit register, or
strict rises are undercut by cheap routes through the past.
Theorem~\ref{thm:monochromy} adds, rigorously, that the
surviving fee must sit uniformly on one orbit's self-returns. A
position-fixed toll zone over an isometric shadow (entered
once, never left, with a base-symmetric deep region) satisfies
every constraint at once, the ladders satisfied not by growing
fees but by pairs migrating into the zone
(Lemma~\ref{lem:tollmono}). Whether this shape is forced in
general is Question~\ref{q:forced}. The first failure mode is
made rigorous by the following delimiting observation.
\end{remark}

\begin{proposition}[Plasticity by conservation]
\label{prop:conserve}
Call a quasi-metric space $(X,d)$ \emph{conservative} if there
is a constant $M$ with $d(x,y) + d(y,x) = M$ for all
$x \ne y$. Every non-contractive self-map of a conservative
quasi-metric space is an isometry (bijectivity is not needed).
In particular, for any continuous density $\rho > 0$ on the
circle, the counterclockwise arc integral
$d_\rho(x,y) = \int_{x \curvearrowright y} \rho$ is a
conservative quasi-metric with $M = \int \rho$, and no density
grading of the circle yields a counterexample.
\end{proposition}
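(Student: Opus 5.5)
The plan is to handle the abstract claim first and the circle instance second. Fix a non-contractive map $F$ of a conservative space $(X,d)$ and a pair $x \ne y$. We split by whether $F$ identifies the two points.

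\emph{Case $Fx = Fy$.} Then $0 = d(Fx,Fy) \ge d(x,y) \ge 0$, so $d(x,y) = 0$. Separation then forces $x = y$, a contradiction. Hence $F$ is automatically injective, and this is the only place axiom (c) enters.

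\emph{Case $Fx \ne Fy$.} Both $\{x,y\}$ and $\{Fx,Fy\}$ are distinct pairs, so conservativity gives
\[
d(Fx,Fy) + d(Fy,Fx) = M = d(x,y) + d(y,x).
\]
Non-contractiveness gives $d(Fx,Fy) \ge d(x,y)$ and $d(Fy,Fx) \ge d(y,x)$. Two nonnegative slacks that sum to zero must both vanish, so equality holds in each. On the diagonal equality is trivial, so $F$ is an isometry. Bijectivity is never used, which matches the parenthetical in the statement.

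For the circle, parametrize $\R/\Z$ by $[0,1)$ and write $d_\rho(x,y) = \int_x^{y'} \rho$, where $y' \in [x, x+1)$ is the lift of $y$ and $\rho$ is extended periodically. We then check the three axioms and conservativity.
\begin{itemize}
\item $d_\rho(x,x) = 0$ because the arc is empty.
\item Separation holds because $\rho > 0$ is continuous on a compact set, so $\rho \ge \min\rho > 0$, and $d_\rho(x,y) \ge \min\rho \cdot \ell(x \curvearrowright y) > 0$ for $x \ne y$.
\item For the triangle inequality, the counterclockwise arc from $x$ to $z$ is contained in the concatenation of the arcs $x \curvearrowright y$ and $y \curvearrowright z$. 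When the concatenation winds past $z$, its length exceeds the direct arc by a full turn; otherwise the two coincide. Since $\rho \ge 0$, the integral is monotone under this containment.
\item For $x \ne y$, the two counterclockwise arcs $x \curvearrowright y$ and $y \curvearrowright x$ partition the circle up to endpoints, so $d_\rho(x,y) + d_\rho(y,x) = \int_{\R/\Z}\rho = M$.
\end{itemize}
The first part of the proof then applies, and no density grading of the circle yields a counterexample.

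I expect no genuine obstacle. The only delicate points are remembering that conservativity applies only to distinct pairs, which is why the injectivity step via separation is needed, and the winding bookkeeping in the triangle inequality, which I would handle by working with lifts to $\R$.
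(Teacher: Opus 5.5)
Your proposal is correct and follows the paper's proof essentially step for step. Separation forces $Fx \ne Fy$, conservativity then squeezes both non-contractive inequalities into equalities, and your circle checks match the paper's: positivity of $\rho$ for separation, the full-wrap comparison for the triangle inequality, and the partition of the circle for $M=\int\rho$ (your lifts to $\R$ just spell out the paper's one-line wrap argument).
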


\begin{proof}
For $x \ne y$, separation gives $d(x,y) > 0$, so
$Fx \ne Fy$, and
$M = d(Fx,Fy) + d(Fy,Fx) \ge d(x,y) + d(y,x) = M$: both
inequalities are equalities. For $d_\rho$: separation holds
since $\rho$ is positive and continuous; the counterclockwise
route $x \to y \to z$ either is the route $x \to z$ or exceeds
it by a full wrap, giving the triangle inequality; and the two
arcs of a distinct pair partition the circle, so their masses
sum to $\int \rho$.
\end{proof}

The example delimits rather than rivals:
$(\R/\Z, d_\rho)$ is precompact (centers spaced at mass-gaps
below $\varepsilon$ serve) but not hereditarily precompact ---
a strictly clockwise-moving convergent sequence must reach each
later point counterclockwise the long way around, so its
forward distances stay near $M$ and no subsequence is left
K-Cauchy --- and so it sits outside the main hypothesis.

\section{Machine verification}\label{sec:mech}

The results of Section~\ref{sec:construction} are formalized in a
single Lean~4 file, \texttt{TollSpiral.lean}, over
\texttt{mathlib}, compiled sorry-free under a pinned toolchain
(Lean~4.32.0; \texttt{mathlib} at revision
\texttt{81a5d257c8e410db227a6665ed08f64fea08e997}, as pinned in
the \texttt{lake-manifest.json} included with the artifact). The development has two layers, matching the paper: a
core layer abstract over an arbitrary metric space $P$ and an
arbitrary equivariant injective $\varphi\colon \Z \to P$ with
totally bounded range, and a family layer in which $\varphi$ is
\emph{derived} from a generator (an isometric self-equivalence
$\sigma$ of $P$ with an aperiodic point), so that equivariance
\eqref{eq:equiv} is a theorem, not a hypothesis. Ambient
compactness is not assumed in the core or family layers; only
the orbit's total boundedness is used, which is the exact
hypothesis (A2); the circle instance then derives (A2)
from the compactness of the circle.

\begin{samepage}
The correspondence:
\nopagebreak
\begin{center}
\begin{tabular}{ll}
Definition~\ref{def:toll} & \texttt{toll}, \texttt{d}\\
Lemma~\ref{lem:tollsub} & \texttt{toll\_triangle}\\
Lemma~\ref{lem:tollmono} & \texttt{toll\_mono}\\
Proposition~\ref{prop:qm} & \texttt{d\_self},
\texttt{d\_triangle}, \texttt{d\_separation}\\
Proposition~\ref{prop:noncontr} & \texttt{d\_noncontractive},
\texttt{d\_strict}\\
Bijectivity of $S$ & \texttt{shift\_bijective}\\
Lemma~\ref{lem:extract} &
\texttt{exists\_cauchy\_subseq\_of\_totallyBounded}\\
Theorem~\ref{thm:hered} & \texttt{hereditarilyPrecompact}\\
Theorem~\ref{thm:hered} (net form) &
\texttt{hereditarilyPrecompact\_net}\\
Boundedness (Theorem~\ref{thm:family}) & \texttt{d\_bounded}\\
Theorem~\ref{thm:family} & \texttt{counterexample\_family}\\
Corollary~\ref{cor:circle} & \texttt{counterexample},
\texttt{counterexample\_sqrt2}
\end{tabular}
\end{center}
\end{samepage}

Remarks on the correspondence. Theorem~\ref{thm:hered} is
mechanized in both forms:
\begin{center}\small
\texttt{hereditarilyPrecompact}: the sequential statement;\\
\texttt{hereditarilyPrecompact\_net}: the definition itself.
\end{center}
The net form states, for every subset and every $\varepsilon$,
a finite $\varepsilon$-net drawn from the subset, with forward
balls; so the formal statement covers the hypothesis of the
question as literally stated, with no dependence on the
characterization of \cite{KMRV1993}. Both the net form and boundedness are bundled
into the conclusions of \texttt{counterexample\_family} and
\texttt{counterexample}, as is nonnegativity of $d$, so the
headline theorems state the question's hypothesis directly. For
Proposition~\ref{prop:noncontr}, the mechanization certifies
non-contractiveness in full and strictness at the crossing pair
$(-1,-2)$; the exact-jump clause of the proposition is on paper.
\texttt{counterexample\_sqrt2} records the monotonicity and
strictness conjuncts of the concrete instance. Lemma~\ref{lem:extract} is mechanized via the metric
completion (compactness of the closure there) rather than the
printed diagonal argument. In
Corollary~\ref{cor:circle} the circle is
\texttt{AddCircle}~$1$ with the quotient norm, the rotation is an
isometric self-equivalence built from translation invariance, the
identification of its orbit with $n \mapsto n\theta$ is proved by
$\Z$-induction, and aperiodicity is derived from irrationality;
\texttt{counterexample\_sqrt2} instantiates
$\theta = \sqrt{2}$, $c = 1$.

The file's eleven audited theorems carry axiom
audits, marked by \texttt{\#print axioms} lines in the file,
each wrapped in a \texttt{\#guard\_msgs} check whose expected
output is the three-axiom list, so the audit itself is
verified at compile time:
\begin{center}\small
\texttt{toll\_triangle}, \texttt{d\_triangle},
\texttt{shift\_bijective}, \texttt{d\_separation},
\texttt{d\_noncontractive}, \texttt{d\_strict},\\
\texttt{hereditarilyPrecompact},
\texttt{hereditarilyPrecompact\_net},\\
\texttt{counterexample\_family}, \texttt{counterexample},
\texttt{counterexample\_sqrt2}.
\end{center}
Each depends on exactly \texttt{propext},
\texttt{Classical.choice}, and \texttt{Quot.sound} (the
standard classical base), and on no other axiom; the remaining
listed items are definitions and supporting results compiled,
like the whole file, without \texttt{sorry}s. The artifact
accompanies the paper, with its SHA-256 recorded in the deposit:
\begin{center}\small
\texttt{9c02c6903221ecc843ed15d8456ef939f9eb100034e03fc1710acffa45397dec}
\end{center}
Not mechanized: the conjugate-space computation
(Section~\ref{sec:conjugate}), the $p$-adic instance
(Corollary~\ref{cor:padic}), the structural theory
(Section~\ref{sec:anatomy}, including
Proposition~\ref{prop:conserve}), and the boundary results of
Section~\ref{sec:questions}
(Propositions~\ref{prop:symcompact}, \ref{prop:deepcopy}, and
\ref{prop:incomplete}); their proofs are as printed.

\begin{samepage}
Since a verification claim is only as good as the statements
verified, we reproduce the key definitions and the two
principal statements verbatim from the file, up to line breaks
(in context, $c$ and $\varphi$ are section variables):
\nopagebreak
\begin{small}
\begin{verbatim}
def toll (n m : ℤ) : ℝ := if m < n ∧ 0 ≤ n then c else 0

def d (n m : ℤ) : ℝ := dist (φ n) (φ m) + toll c n m

def LeftKCauchy (w : ℕ → ℤ) : Prop :=
  ∀ ε : ℝ, 0 < ε → ∃ N : ℕ, ∀ k l : ℕ,
    N ≤ k → k ≤ l → d c φ (w k) (w l) < ε

def orbitMap (σ : P ≃ᵢ P) (y₀ : P) (n : ℤ) : P := (σ ^ n) y₀
\end{verbatim}
\end{small}
\end{samepage}

\begin{samepage}
The two principal statements follow, verbatim from the same
file.
\nopagebreak
\begin{small}
\begin{verbatim}
theorem counterexample_family (c : ℝ) (hc : 0 < c)
    (σ : P ≃ᵢ P) (y₀ : P)
    (haper : ∀ k : ℤ, k ≠ 0 → (σ ^ k) y₀ ≠ y₀)
    (hTB : TotallyBounded (Set.range (orbitMap σ y₀))) :
    Function.Bijective (fun n : ℤ => n + 1) ∧
    (∀ n : ℤ, d c (orbitMap σ y₀) n n = 0) ∧
    (∀ n m k : ℤ, d c (orbitMap σ y₀) n k ≤
      d c (orbitMap σ y₀) n m + d c (orbitMap σ y₀) m k) ∧
    (∀ n m : ℤ, d c (orbitMap σ y₀) n m = 0 → n = m) ∧
    (∀ n m : ℤ, d c (orbitMap σ y₀) n m ≤
      d c (orbitMap σ y₀) (n + 1) (m + 1)) ∧
    (d c (orbitMap σ y₀) (-1) (-2) <
      d c (orbitMap σ y₀) (-1 + 1) (-2 + 1)) ∧
    (∀ u : ℕ → ℤ, ∃ g : ℕ → ℕ, StrictMono g ∧
      LeftKCauchy c (orbitMap σ y₀) (u ∘ g)) ∧
    (∃ B : ℝ, ∀ n m : ℤ, d c (orbitMap σ y₀) n m ≤ B) ∧
    (∀ W : Set ℤ, ∀ ε : ℝ, 0 < ε → ∃ F : Finset ℤ,
      ↑F ⊆ W ∧ ∀ y ∈ W, ∃ a ∈ F, d c (orbitMap σ y₀) a y < ε) ∧
    (∀ n m : ℤ, 0 ≤ d c (orbitMap σ y₀) n m)

theorem hereditarilyPrecompact_net
    (hTB : TotallyBounded (Set.range φ))
    (W : Set ℤ) {ε : ℝ} (hε : 0 < ε) :
    ∃ F : Finset ℤ, ↑F ⊆ W ∧ ∀ y ∈ W, ∃ a ∈ F, d c φ a y < ε
\end{verbatim}
\end{small}
\end{samepage}

\section{Questions}\label{sec:questions}

The negative answer relocates the boundary rather than closing
the subject; three questions now stand where one did, and three
boundary results locate them.

\begin{question}[Compactness]\label{q:compact}
Zavarzina's question, with precompactness strengthened to
compactness: does some notion of compactness for quasi-metric
spaces (of the several inequivalent ones; see
\cite{Cobzas2013}) force bijective non-contractive self-maps to
be isometries?
\end{question}

\needspace{8\baselineskip}
\begin{samepage}
The following three results delimit the question.
\nopagebreak

\begin{proposition}[The symmetrized case is positive]
\label{prop:symcompact}
If the symmetrization $d_s = d + d^{-1}$ of a quasi-metric space
is totally bounded (in particular, if it is compact as a
metric), then every non-contractive self-map is an isometry
(bijectivity is not needed).
\end{proposition}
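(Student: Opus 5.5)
The plan is to pass to the symmetrization, prove rigidity there by the classical Freudenthal--Hurewicz recurrence argument, and then split the symmetric equality back into its two one-sided halves. Bijectivity is never used, because the argument only runs orbits forward.

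\emph{Step 1 (reduction to $d_s$).} First, $d_s$ is a genuine metric: it is symmetric, satisfies the triangle inequality, and $d_s(x,y)=0$ forces $d(x,y)=0$, hence $x=y$ by separation. Next, if $F$ is $d$-non-contractive, then adding the two inequalities $d(Fx,Fy)\ge d(x,y)$ and $d(Fy,Fx)\ge d(y,x)$ gives
\[
d_s(Fx,Fy)\ \ge\ d_s(x,y),
\]
so $F$ is $d_s$-non-contractive. Conversely, suppose we know $d_s(Fx,Fy)=d_s(x,y)$. Then the two one-sided inequalities have equal sums, and each is an inequality in the same direction, so both must be equalities. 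Hence $d(Fx,Fy)=d(x,y)$ for every ordered pair. It therefore suffices to show that a non-contractive self-map of a totally bounded metric space is an isometry.

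\emph{Step 2 (metric rigidity).} Suppose, toward a contradiction, that $d_s(Fx,Fy)>d_s(x,y)+2\varepsilon$ for some $x,y$ and some $\varepsilon>0$. I would consider the sequence $(F^n x,F^n y)_{n\ge 0}$ in $X\times X$ with the sum metric, which is totally bounded. By Lemma~\ref{lem:extract}, or simply by pigeonhole on a finite $\varepsilon/2$-cover, there are $k<l$ with
\[
d_s(F^k x,F^l x)<\varepsilon \quad\text{and}\quad d_s(F^k y,F^l y)<\varepsilon .
\]
Set $m=l-k\ge1$. Non-contractivity applied to the pair $(x,F^m x)$, iterated $k$ times, gives $d_s(x,F^m x)\le d_s(F^k x,F^l x)<\varepsilon$, and likewise $d_s(y,F^m y)<\varepsilon$. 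The triangle inequality then yields
\[
d_s(F^m x,F^m y)\ <\ d_s(x,y)+2\varepsilon .
\]
On the other hand, the distance history $n\mapsto d_s(F^n x,F^n y)$ is nondecreasing for $n\ge0$, so
\[
d_s(F^m x,F^m y)\ \ge\ d_s(Fx,Fy)\ >\ d_s(x,y)+2\varepsilon .
\]
These two bounds contradict each other, so $d_s(Fx,Fy)=d_s(x,y)$ for all $x,y$, and Step~1 finishes the proof.

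\emph{Main obstacle.} The delicate point is the recurrence step without bijectivity. One cannot pull back by $F^{-k}$, so the smallness $d_s(F^kx,F^lx)<\varepsilon$ must be transported to $d_s(x,F^mx)$ in the other direction, using non-contractivity as an upper bound on earlier distances. This works only because the pair $(x,F^m x)$ is mapped by $F^k$ to $(F^k x,F^l x)$, and this is exactly where forward orbits suffice. The parenthetical about compactness follows because a compact metric space is totally bounded.
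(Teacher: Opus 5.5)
Your proof is correct, but it follows a different route from the paper's.

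The paper never uses $d_s$ as a metric in its own right. It observes that total boundedness of a metric is hereditary, and that a $d_s$-net is simultaneously a $d$-net and a $d^{-1}$-net (since $d, d^{-1} \le d_s$). So $(X,d)$ and $(X,d^{-1})$ are both hereditarily precompact, and the paper then cites Zavarzina's Theorem~3 \cite{Zav2023} as a black box.

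You instead push non-contractivity down to $d_s$. You then prove the classical Freudenthal--Hurewicz rigidity for the totally bounded metric $d_s$ directly, by the forward-orbit recurrence argument: pigeonhole in $X\times X$, transport of the small returns back to time $0$ by non-contractivity, and monotonicity of the distance history. Finally you split the symmetric equality into the two one-sided equalities, since both one-sided inequalities point the same way and have equal sums.

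The paper's route is two lines and presents the proposition as a direct instance of the two-sided theorem whose sharpness is the subject of Section~\ref{sec:conjugate}. Yours is self-contained and depends neither on that theorem nor on the sequential characterization of \cite{KMRV1993}. It also does not really need separation, so it covers quasi-pseudo-metrics too, and it makes plain that the symmetrized case is the metric theorem in disguise.

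Conversely, via that characterization, two-sided hereditary precompactness makes $d_s$ totally bounded: extract a left K-Cauchy subsequence for $d$, then a further one for $d^{-1}$, and the result is $d_s$-Cauchy. So your argument amounts to an elementary proof of the non-contractive case of Zavarzina's Theorem~3 itself.
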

\end{samepage}

\begin{proof}
Total boundedness of a metric is hereditary, and a $d_s$-net is
both a $d$-net and a $d^{-1}$-net, since $d \le d_s$ and
$d^{-1} \le d_s$. Both $(X,d)$ and $(X,d^{-1})$ are therefore
hereditarily precompact, and Theorem~3 of \cite{Zav2023}
applies: any function expanding some pair contracts another, so
a non-contractive self-map expands no pair and is an isometry.
\end{proof}

Second, without separation even compactness together with
completeness does not help.

\begin{proposition}[A compact, complete, non-separated
counterexample]\label{prop:deepcopy}
Extend the circle instance by a deep boundary: let
$X = \Z \sqcup (\R/\Z)$, write $y^{*}$ for the copy of
$y \in \R/\Z$, let every copy lie below every integer in the
index order (the copies pairwise incomparable: the toll
compares points only against recent sources, so nothing depends
on their mutual order), set $\psi(n) = n\theta$ and $\psi(y^{*}) = y$,
keep the toll of the construction on the extended order (a copy
is never a recent source), and let
$d(a,b) = \rho(\psi a, \psi b) + \tau(a,b)$. Let
$S(n) = n+1$ and $S(y^{*}) = (y+\theta)^{*}$. Then $(X,d)$ is a
bounded, hereditarily precompact quasi-pseudo-metric space,
compact in the forward-ball topology (open sets: those containing
a forward ball around each of their points) and left
K-sequentially complete (a limit meaning $d(p, x_k) \to 0$), and $S$ is a bijective non-contractive
map that is not an isometry. But $(X,d)$ is not $T_0$.
\end{proposition}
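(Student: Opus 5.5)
The plan is to check, one by one, the ingredients already used for Theorem~\ref{thm:family}, reading the toll on the extended order as follows: $\tau(a,b)=c$ exactly when $a$ is an integer $a\ge 0$ and $b$ lies strictly below $a$, meaning either $b\in\Z$ with $b<a$, or $b$ is a copy. Otherwise $\tau(a,b)=0$; in particular $\tau$ vanishes whenever the source is a copy or a negative integer. Write $\rho$ for the arc metric on $\R/\Z$.

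\textbf{Quasi-pseudo-metric axioms.} The base part $\rho(\psi a,\psi b)$ is a pseudometric, so only the toll needs checking. I will redo Lemma~\ref{lem:tollsub} on the extended order. Suppose $\tau(a,e)=c$, so $a\ge 0$ is an integer and $e$ is below $a$, and suppose $\tau(a,b)=0$. Then $b$ is an integer with $b\ge a\ge 0$, so $b$ is a recent source. Since $e$ is below $a\le b$, we get $\tau(b,e)=c$. This triangle-inequality case analysis, with the incomparable copies, is the only place where care is needed, and I expect it to be the main (if modest) obstacle. The diagonal vanishes as before. Boundedness is immediate: $d\le \tfrac12+c$.

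\textbf{Shift properties.} Equivariance holds: $\psi(Sa)=\psi(a)+\theta$ for both integers and copies, and rotation is an isometry, so the base parts agree along $S$. The toll is monotone: if $\tau(a,b)=c$, then $Sa=a+1\ge 1$ is recent, and $Sb$ is still below it (a copy stays a copy, and an integer $b<a$ gives $b+1<a+1$). Hence $S$ is non-contractive. Strictness is witnessed at $(-1,-2)$ exactly as in Proposition~\ref{prop:noncontr}. Bijectivity holds because $S$ is the shift on $\Z$ and the rotation on the copies.

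\textbf{Failure of $T_0$.} Take $a=-1$ and $b=(-\theta)^{*}$. Then $\psi a=\psi b$. The toll vanishes in both directions, since a negative integer and a copy are never recent sources. So $d(a,b)=d(b,a)=0$ with $a\ne b$, and the two points are topologically indistinguishable.

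\textbf{Hereditary precompactness.} I will follow the direct proof of Theorem~\ref{thm:hered}. Split $W$ into a recent part $W\cap\{n\ge0\}$ and a deep part consisting of the negative integers and the copies in $W$. On the deep part every source is toll-free, so $d$ there is the pseudometric $\rho\circ\psi$. This is totally bounded because the circle is compact, so the deep part has a finite net inside itself. On the recent part I will use the least-element argument verbatim.

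\textbf{Forward compactness.} The key observation is that a copy $y^{*}$ is a toll-free source: $B(y^{*},r)=\{x:\rho(y,\psi x)<r\}=\psi^{-1}(\text{open arc})$. Given an open cover, each $y\in\R/\Z$ has $y^{*}$ in some member $U_y$, which contains some $B(y^{*},r_y)$. The arcs of radius $r_y$ about $y$ cover the compact circle, so finitely many suffice. Since $\psi$ maps all of $X$ into the circle, the corresponding finitely many $U_y$ cover $X$.

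\textbf{Left K-sequential completeness.} Let $(x_k)$ be left K-Cauchy. For $k\le l$ we have $\rho(\psi x_k,\psi x_l)\le d(x_k,x_l)$. Since $\rho$ is symmetric, $(\psi x_k)$ is $\rho$-Cauchy and so converges to some $y$ in the compact circle. Taking $p=y^{*}$, the source $p$ is toll-free, so $d(p,x_k)=\rho(y,\psi x_k)\to0$.
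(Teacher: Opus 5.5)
Your proposal is correct and follows the paper's proof essentially step for step. It uses the same two-case toll-subadditivity check on the extended order, which you phrase as ``if $\tau(a,b)=0$ then $b$ is a recent integer $\ge a$, so $\tau(b,e)=c$''. It carries over the direct net argument with the copies placed in the toll-free deep part, and it uses the toll-free copy sources, whose forward balls are $\psi$-preimages of arcs, for both compactness and left K-sequential completeness. Finally, your coincident pair $-1$, $(-\theta)^{*}$ is the paper's witness $j$, $(j\theta)^{*}$ with $j=-1$ for the failure of $T_0$.
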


\begin{proof}
The toll's subadditivity holds on the extended order by a
two-case check: if $\tau(a,k) = c$ (so $a$ is a recent
integer and $k$ lies below it) and $m$ is any third point,
then either $m$ lies below $a$ (a copy, or an integer $< a$) and
$\tau(a,m) = c$; or $m$ is an integer $\ge a$, hence recent,
with $k$ below $m$, and $\tau(m,k) = c$.
(Lemma~\ref{lem:tollsub}'s printed proof uses totality of the
order; the placement of all copies below all integers is what
makes the extended, partial order work.) Monotonicity is
unchanged (the shift preserves recency and order), and the
quasi-pseudo-metric axioms follow as in
Proposition~\ref{prop:qm}. $S$ is a bijection, preserves the
base part ($\psi(Sa) = \psi a + \theta$ in either case), and
never decreases the toll; the pair $(-1,-2)$ still rises by $c$.
Hereditary precompactness: the direct proof of
Theorem~\ref{thm:hered} carries over: the deep part
now consists of the copies and the negative integers, all
toll-free as sources, the restriction of $d$ there being the
pseudometric $\rho(\psi\cdot,\psi\cdot)$ (the net argument
nowhere uses injectivity), with base image inside the compact
circle;
the recent part is unchanged. Compactness: no toll is ever
charged from a copy, so the forward ball at $z^{*}$ of radius
$\varepsilon$ is the full preimage
$\psi^{-1}(B_\rho(z,\varepsilon))$; given an open cover,
choose for each $z$ a member containing $z^{*}$ and, inside it,
such a preimage; finitely many of the $\rho$-balls cover the
circle, and their preimages cover $X$. Completeness: if $(x_k)$
is left K-Cauchy then $(\psi x_k)$ is $\rho$-Cauchy with a
limit $z$ in the circle, and
$d(z^{*}, x_k) = \rho(z, \psi x_k) \to 0$. Separation: for a
negative integer $j$, the points $j$ and $(j\theta)^{*}$ are
distinct with $d = 0$ in both directions, not $T_0$; this is
the failure the construction of Section~\ref{sec:construction}
is built to avoid.
\end{proof}

Third, separated toll extensions are neither complete nor
compact, however many points are added.

\begin{proposition}[Separated toll extensions are incomplete
and non-compact]\label{prop:incomplete}
Call $(X', d')$ an \emph{extension of the same base-plus-toll
form} when $X' \supseteq \Z$; a base map
$\psi \colon X' \to Y$ into the same metric space extends
$\varphi$; a strict index order on $X'$ extends the order of
$\Z$; and $d'(a,b) = \rho(\psi a, \psi b) + \tau(a,b)$ with
the toll charged exactly on descents from recent sources, the
recent points among the integers being exactly
$\{n \ge 0\}$, and $d'$ a quasi-metric (toll subadditivity on
the extended order together with injectivity of $\psi$ suffices
for this, but is not necessary). (Whether new points are recent
is immaterial below.) In the setting of
Theorem~\ref{thm:family},
$(\Z, d)$ is not left K-sequentially complete (a limit meaning
$d(p, x_k) \to 0$) and not countably compact in the
forward-ball topology (hence not compact), and the same holds
for every extension of the same base-plus-toll form.
The statement concerns the single-toll form only; toll-type
spaces with several charges, or with other triggers, are not
covered.
\end{proposition}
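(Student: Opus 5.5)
The plan is to build one explicit sequence in the deep zone that is left K-Cauchy but has no forward limit. The same sequence will have no cluster point in the forward-ball topology. The argument should use only the base-plus-toll shape and separation, so it applies verbatim to $(\Z,d)$ and to every extension $(X',d')$ of the same form.

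\emph{Step 1: $\psi$ is injective on $X'$.} Suppose $p\ne q$ in $X'$ with $\psi p=\psi q$. Then $d'(p,q)=\tau(p,q)$ and $d'(q,p)=\tau(q,p)$. Separation forces both to be positive, hence equal to $c$. But the toll is charged only on descents, so we would need $q$ below $p$ and $p$ below $q$ in a strict order, which is impossible. So no point of $X'$ other than $0$ has base image $\varphi(0)$.

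\emph{Step 2: returns of the orbit.} By (A2) and Lemma~\ref{lem:extract}, the sequence $\varphi(k)$, $k\in\N$, has a $\rho$-Cauchy subsequence. Choose indices $a_k<b_k$ from it with $b_k-a_k\to\infty$ strictly increasing and $\rho(\varphi(a_k),\varphi(b_k))\to0$. By equivariance \eqref{eq:equiv}, the displacements $m_k=b_k-a_k$ satisfy $\rho(\varphi(-m_k),\varphi(0))=\rho(\varphi(a_k),\varphi(b_k))\to0$. Set $x_k=-m_k$. These are distinct deep integers, strictly descending, with $\varphi(x_k)\to\varphi(0)$ in $\rho$.

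\emph{Step 3: $(x_k)$ is left K-Cauchy with no limit.} All sources $x_k$ are deep, so $d'(x_k,x_l)=\rho(\varphi(x_k),\varphi(x_l))$. This is small for large $k\le l$ because $(\varphi(x_k))$ converges. Now suppose $d'(p,x_k)\to0$ for some $p\in X'$. Then $\rho(\psi p,\varphi(x_k))\to0$, so $\psi p=\varphi(0)$, and Step~1 gives $p=0$. But $0$ is a recent source and every $x_k<0$, so $d'(0,x_k)\ge\tau(0,x_k)=c$ for all $k$, a contradiction. Hence $(\Z,d)$ and every such extension fail left K-sequential completeness. New points of $X'$, recent or not, play no role beyond Step~1.

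\emph{Step 4: no cluster point.} Suppose $p$ were a cluster point of $(x_k)$ in the forward-ball topology. Then every forward ball $B(p,1/j)$ contains $x_k$ for infinitely many $k$. Choosing increasing indices gives a subsequence with $d'(p,x_{k_j})\to0$. The argument of Step~3 applies unchanged to this subsequence: it forces $\psi p=\varphi(0)$, hence $p=0$, and the toll again gives a contradiction. So the infinite sequence of distinct points $(x_k)$ has no cluster point, the space is not countably compact, and therefore it is not compact.

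The main obstacle is Step~3: ruling out every candidate limit, including the added points of $X'$. Step~1 is designed to handle this. It reduces every candidate to the single integer $0$, and the toll then blocks $0$. This is also why the sequence is taken in the deep zone, converging to the recent point $0$. An ascending recent sequence would in fact converge to $0$, since an ascent from $0$ carries no toll.
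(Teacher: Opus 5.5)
Your proof is correct and follows the paper's argument essentially step for step: injectivity of $\psi$ from separation (the toll is never positive in both directions); a sequence of deep integers $n_k$ with $\varphi(n_k)\to\varphi(0)$, obtained from total boundedness plus equivariance (you use forward returns via Lemma~\ref{lem:extract}, the paper uses backward ones, which makes no difference); left K-Cauchyness because deep sources pay no toll; and exclusion of the only candidate limit or cluster point, $0$, by $d'(0,n_k)\ge c$. The one detail the paper states that you leave implicit is that forward balls are open in the forward-ball topology (by the triangle inequality), and this is what justifies the first sentence of your Step~4.
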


\begin{proof}
Separation forces the base map to be injective in any space of
this form: two distinct points with the same base value are at
pure-toll distance in both directions, and the toll, firing only
toward the deeper index, is never positive both ways. Now run
the backward orbit: total boundedness gives $t_i < t_j$ with
$\rho(\varphi(-t_i), \varphi(-t_j))$ arbitrarily small, and
equivariance transports the pair to
$\rho(\varphi(0), \varphi(-(t_j - t_i)))$: there are negative
$n_k$ with $\varphi(n_k) \to \varphi(0)$. Then $(n_k)$ is
left K-Cauchy: deep sources pay no toll, so its ordered
distances are base distances, which vanish since
$\varphi(n_k) \to \varphi(0)$. A limit $p$ must
satisfy $\rho(\psi p, \varphi(n_k)) \to 0$, hence
$\psi p = \varphi(0) = \psi(0)$, hence $p = 0$ by injectivity;
but $d(0, n_k) \ge c$ for every $k$, each $n_k$ being a descent
from the recent source $0$. No limit exists, in $(\Z,d)$ or in
any extension of the same form: a new point serving as
the limit would again carry base value $\varphi(0)$, and
distinctness from $0$ would violate separation. The same
computation refutes compactness. Forward balls are open in the
forward-ball topology (by the triangle inequality), and a
cluster point $p$ of $(n_k)$ would need
$\rho(\psi p, \varphi(n_k))$ small infinitely often, forcing
$\psi p = \varphi(0)$ and $p = 0$ as before --- impossible,
since the ball at $0$ of radius $c$ contains no $n_k$. The
sequence has no cluster point, and in a countably compact
space every sequence has one, so the space is not countably
compact, hence not compact.
\end{proof}

What remains open of Question~\ref{q:compact} is the separated,
one-sided compact case: the three propositions settle the
symmetrized case positively and the non-separated case
negatively, and rule out compactness for the single-toll
family, without classifying the separated case. The natural line of attack
runs through Theorem~\ref{thm:monochromy} and completeness: the
defect-carrying endpoint's backward returns form a left K-Cauchy
sequence, completeness would supply a limit, and the question
becomes where the monochrome defect can then sit. We leave it
open.

\begin{question}[The non-separated setting]
Dropping axiom (c): a quasi-pseudo-metric space admits free moves:
distinct points at distance zero in one direction. The present
construction is built to satisfy separation, which the question of
\cite{Zav2023} requires. Whether the expand-contract dichotomy
survives, fails more severely, or trivializes in the
non-separated
(Lawvere) setting does not appear to have been studied.
Proposition~\ref{prop:deepcopy} is a first datum: without
separation the dichotomy fails even under compactness and left
K-sequential completeness. A finer question sits in the gap
between the two boundary results: axiom (c) forces
$d(x,y) > 0$ \emph{and} $d(y,x) > 0$ for distinct points,
while $T_0$ asks only one of the two;
Proposition~\ref{prop:deepcopy}'s example is not $T_0$ (its
coincident pair vanishes in both directions), and the
separation argument of Proposition~\ref{prop:incomplete}
consumes (c) in full, not $T_0$. Is there a $T_0$ example,
compact and left K-sequentially complete?
\end{question}

\begin{question}[Forced form]\label{q:forced}
The structural theory of Section~\ref{sec:anatomy} constrains
the shape; it does not prove it universal. Is every
bijective non-contractive map that is not an isometry, on a
hereditarily precompact
quasi-metric space, conjugate in an appropriate sense to a tolled
extension (a fixed toll zone over an isometric shadow),
possibly with infinitely many zones? The register decomposition
$d = \lambda^- + D_0$ (Theorem~\ref{thm:register}) and the
monochromy theorem suggest the conjecture; the eliminations
behind the construction are evidence, not proof, except the
conservation case (Proposition~\ref{prop:conserve}). A positive
answer preserving completeness would, with
Proposition~\ref{prop:incomplete}, decide the complete separated
case of Question~\ref{q:compact} affirmatively; we record this
as motivation, not implication, since whether the conjugacy
preserves completeness is unclear.
\end{question}

\section*{Data availability}

The complete verification artifact is
archived with this paper at Zenodo
(\href{https://doi.org/10.5281/zenodo.22801325}{doi:\nolinkurl{10.5281/zenodo.22801325}}): the Lean source
\texttt{TollSpiral.lean}, the toolchain pins
(\texttt{lean-toolchain}, \texttt{lakefile.toml},
\texttt{lake-manifest.json}), and a SHA-256 manifest. The SHA-256 of the
Lean source is printed in Section~\ref{sec:mech}; the artifact
compiles, sorry-free and warning-free, under the pinned
toolchain, and the compilation itself checks the axiom audits.
No other data were generated or analyzed.


\begin{thebibliography}{99}

\bibitem{AKZ2019}
Angosto, C., Kadets, V., Zavarzina, O.:
Non-expansive bijections, uniformities and polyhedral faces.
Journal of Mathematical Analysis and Applications 471(1--2),
38--52 (2019). \href{https://doi.org/10.1016/j.jmaa.2018.10.058}{doi:\nolinkurl{10.1016/j.jmaa.2018.10.058}}.

\bibitem{BMZ2026}
Banakh, T., Mazurenko, O., Zavarzina, O.:
Plastic metric spaces and groups.
Topology and its Applications 384 (2026), 109802.
\href{https://doi.org/10.1016/j.topol.2026.109802}{doi:\nolinkurl{10.1016/j.topol.2026.109802}};
\href{https://arxiv.org/abs/2510.10537}{arXiv:2510.10537}.

\bibitem{CKOW2016}
Cascales, B., Kadets, V., Orihuela, J., Wingler, E.J.:
Plasticity of the unit ball of a strictly convex Banach space.
Revista de la Real Academia de Ciencias Exactas, F\'isicas y
Naturales, Serie A, Matem\'aticas 110(2), 723--727 (2016).
\href{https://doi.org/10.1007/s13398-015-0261-3}{doi:\nolinkurl{10.1007/s13398-015-0261-3}}.

\bibitem{Cobzas2013}
Cobza\c{s}, \c{S}.:
Functional Analysis in Asymmetric Normed Spaces.
Frontiers in Mathematics, Birkh\"auser (2013).
\href{https://doi.org/10.1007/978-3-0348-0478-3}{doi:\nolinkurl{10.1007/978-3-0348-0478-3}}.

\bibitem{FH1936}
Freudenthal, H., Hurewicz, W.:
Dehnungen, Verk\"urzungen, Isometrien.
Fundamenta Mathematicae 26, 120--122 (1936).
\href{https://doi.org/10.4064/fm-26-1-120-122}{doi:\nolinkurl{10.4064/fm-26-1-120-122}}.

\bibitem{KZ2018}
Kadets, V., Zavarzina, O.:
Non-expansive bijections to the unit ball of the $\ell_1$-sum of
strictly convex Banach spaces.
Bulletin of the Australian Mathematical Society 97(2), 285--292
(2018). \href{https://doi.org/10.1017/S0004972717001150}{doi:\nolinkurl{10.1017/S0004972717001150}}.

\bibitem{KZ2024}
Kadets, V., Zavarzina, O.:
Plastic pairs of metric spaces.
Journal of Mathematical Analysis and Applications 529, 127435
(2024).
\href{https://arxiv.org/abs/2301.06589}{arXiv:2301.06589};
\href{https://doi.org/10.1016/j.jmaa.2023.127435}{doi:\nolinkurl{10.1016/j.jmaa.2023.127435}}.

\bibitem{KMRV1993}
K\"unzi, H.-P.A., Mr\v{s}evi\'c, M., Reilly, I.L.,
Vamanamurthy, M.K.:
Convergence, precompactness and symmetry in quasi-uniform spaces.
Mathematica Japonica 38, 239--253 (1993).

\bibitem{LZ2024}
Langemann, D., Zavarzina, O.:
Expand-contract plasticity on the real line.
Frontiers in Applied Mathematics and Statistics 10, 1387012
(2024). \href{https://doi.org/10.3389/fams.2024.1387012}{doi:\nolinkurl{10.3389/fams.2024.1387012}}.

\bibitem{Lawvere1973}
Lawvere, F.W.:
Metric spaces, generalized logic, and closed categories.
Rendiconti del Seminario Matematico e Fisico di Milano 43,
135--166 (1973). Reprinted in:
\href{http://www.tac.mta.ca/tac/reprints/articles/1/tr1abs.html}{Reprints
in Theory and Applications of Categories 1 (2002), 1--37}.

\bibitem{NPW2006}
Naimpally, S.A., Piotrowski, Z., Wingler, E.J.:
Plasticity in metric spaces.
Journal of Mathematical Analysis and Applications 313, 38--48
(2006). \href{https://doi.org/10.1016/j.jmaa.2005.04.070}{doi:\nolinkurl{10.1016/j.jmaa.2005.04.070}}.

\bibitem{Zav2018}
Zavarzina, O.:
Non-expansive bijections between unit balls of Banach spaces.
Annals of Functional Analysis 9(2), 271--281 (2018).
\href{https://doi.org/10.1215/20088752-2017-0050}{doi:\nolinkurl{10.1215/20088752-2017-0050}}.

\bibitem{Zav2023}
Zavarzina, O.:
On expand-contract plasticity in quasi-metric spaces.
Carpathian Mathematical Publications 15(2), 524--528 (2023).
\href{https://doi.org/10.15330/cmp.15.2.524-528}{doi:\nolinkurl{10.15330/cmp.15.2.524-528}};
\href{https://arxiv.org/abs/2207.11513}{arXiv:2207.11513};
\href{https://zbmath.org/?q=an%3A1539.54011}{Zbl 1539.54011}.

\end{thebibliography}
\end{document}